\documentclass[12pt,a4paper,reqno]{amsart}
\usepackage{dutchcal}
\usepackage[T1]{fontenc}
\usepackage{phaistos}
\usepackage{protosem}
\usepackage{amsmath}
\allowdisplaybreaks
\usepackage{amsthm}
\usepackage{amsfonts}
\usepackage{amssymb}
\usepackage{graphicx}
\usepackage{color}
\usepackage{amsbsy}
\usepackage{mathrsfs}
\usepackage{bbm}
\usepackage{verbatim}
\usepackage{tikz}
\usepackage{booktabs}
\usepackage{pgfplots} % 用于计算和坐标，简化 89 度斜坡绘制
\pgfplotsset{compat=1.18}
\usetikzlibrary{shapes,arrows,positioning}

\usepackage[colorlinks=true,linkcolor=black,citecolor=blue]{hyperref}
\newcommand{\R}{\mathbb R}
\newcommand{\norm}[1]{\left\lVert #1\right\rVert}

\newtheorem{theorem}{Theorem}[section]
\newtheorem{lemma}[theorem]{Lemma}

\theoremstyle{remark}
\newtheorem{remark}[theorem]{\it \bf{Remark}\/}

\numberwithin{equation}{section}
\catcode`@=11
\def\section{\@startsection{section}{1}%
  \z@{1.5\linespacing\@plus\linespacing}{.5\linespacing}%
  {\normalfont\bfseries\large\centering}}
\catcode`@=12

\newcommand{\be}{\begin{equation}}
\newcommand{\ee}{\end{equation}}
\newcommand{\bea}{\begin{eqnarray}}
\newcommand{\eea}{\end{eqnarray}}
\newcommand{\bee}{\begin{eqnarray*}}
\newcommand{\eee}{\end{eqnarray*}}

\def\RR{\mathbb{R}}

\catcode`@=11
\def\supess{\mathop{\operator@font Sup\,ess}}
\catcode`@=12

\def\RR{\mathbb{R}}

\def\R2+{\RR ^2_+}

\def\lim{\mathop{\rm lim}}

\def\sup{\mathop{\rm sup}}
\def\exp{{\rm exp}}

\begin{document}

\title[]{FORMATION OF QUASI-SINGULARITIES OF SHOCK AND
IMPLOSION TYPE FOR COMPRESSIBLE EULER FLOWS}
\author[]{XIELING FAN}
\address{School of Mathematics, Central South University, Changsha, China and Department of Mathematics, City University of Hong Kong, Kowloon, Hong Kong SAR, China}
\email{fanxieling@outlook.com, xielinfan2-c@my.cityu.edu.hk}
\author[]{Hongyu Liu}
\address{Department of Mathematics, City University of Hong Kong, Kowloon, Hong Kong SAR, China}
\email{hongyu.liuip@gmail.com, hongyuliu@cityu.edu.hk}
\begin{abstract} 
This paper investigates a novel quasi-singularity formation phenomenon in the isentropic compressible Euler equations in three dimensions. 
For any prescribed finite set of points $\{z_j\}_{j=1}^N \subset \mathbb{R}^3$ and an arbitrarily large parameter $M \gg 1$, we explicitly construct real-analytic initial data generated by Herglotz-type wave fields. We prove that the corresponding classical solution
$$
(p,v)\in C^1([0,T]\times\mathbb{R}^3;\mathbb{R}\times\mathbb{R}^3),
$$
with
$$
T = C_0 \, M^{-\frac{3\gamma+1}{\gamma-1}},
$$
where $\gamma > 1$ is the adiabatic index and $C_0 > 0$ is a constant independent of $M$. Throughout the time interval $[0, T]$, the velocity field, its spatial gradient, and the pressure gradient exhibit simultaneous localized amplification at each prescribed point $z_j$. More precisely, each component of the velocity field is bounded below by $M$, each component of the pressure gradient is of size at least $M^{\frac{2\gamma}{\gamma-1}}$, and the velocity gradient exhibits a prescribed anisotropic sign pattern with entries of size at least $M$ at every $z_j$. All implicit constants in these estimates are independent of $M$ and depend solely on the fixed physical parameters and the locations $\{z_j\}_{j=1}^N$.

This mechanism generates highly localized, shock-like gradient concentrations and implosion-like spatial profiles within the smooth regime. The underlying mechanism relies on the construction of the Herglotz kernel via Helmholtz far-field patterns, together with delicate quantitative energy estimates for the quasilinear hyperbolic system.

\noindent{\bf Keywords:}~~isentropic Euler equations; quasi-singularity formation; shock and implosion; localized energy concentration.

\noindent{\bf 2020 Mathematics Subject Classification:}~~35L60, 35B44, 35L15 

\end{abstract}

\maketitle

%%%%%%%%%%%%%%%%%%%%%%%%%%%%%%%%%%%%%%%%%%%%%%%%%%
%%%%%%%%%%%%%%%%%%%%%%%%%%%%%%%%%%%%%%%%%%%%%%%%%%

\section{Introduction}

%%%%%%%%%%%%%%%%%%%%%%%%%%%%%%%%%%%%%%%%%%%%%%%%%%
%%%%%%%%%%%%%%%%%%%%%%%%%%%%%%%%%%%%%%%%%%%%%%%%%%

%%%%%%%%%%%%%%%%%%%%%%%%%%%%%%%%%%%%%%%%%%%%%%%%%%

\subsection{Setting of the problem}

%%%%%%%%%%%%%%%%%%%%%%%%%%%%%%%%%%%%%%%%%%%%%%%%%%

We consider the three-dimensional isentropic compressible Euler equations formulated in terms of the pressure $p$ and velocity $v$:
\begin{equation}\label{eq:euler_pressure_velocity} 
\begin{cases} 
\partial_t p + v \cdot \nabla p + \gamma p \, \nabla \cdot v = 0, & \text{in } [0,T] \times \mathbb{R}^{3}, \\[1.2ex] 
\partial_t v + v \cdot \nabla v + \dfrac{1}{\rho(p)} \nabla p = 0, & \text{in } [0,T] \times \mathbb{R}^{3}, \\[2ex] 
p(x,0) = p_{\mathrm{in}}(x), \quad v(x,0) = v_{\mathrm{in}}(x), & \text{in } \mathbb{R}^{3},
\end{cases}
\end{equation}
where $\gamma > 1$ denotes the adiabatic exponent. The fluid density $\rho$ and pressure $p$ satisfy the equation of state
\begin{equation}\label{eq:pressure-density-relation}
p = A \rho^{\gamma}, \quad \text{or equivalently,} \quad \rho(p) = \left(\frac{1}{A}\right)^{\frac{1}{\gamma}} p^{\frac{1}{\gamma}},
\end{equation}
with $A > 0$ being a constant. The smooth initial data $p_{\mathrm{in}}$ and $v_{\mathrm{in}}$ will be constructed in the subsequent section.

The problem of understanding singularity formation in fluid dynamics is both fundamental and challenging. A central aspect is whether smooth initial data can evolve into a regime where regularity is lost and certain physical or geometric quantities blow up in finite time, and how such singular behavior develops and localizes. For the compressible Euler equations, extensive research has been devoted to various singularity formation mechanisms, including shock formation and, more recently, implosion type singularities. These results concern genuine singularities, where quantities become unbounded and regularity breaks down.

In this paper, we investigate a different but closely related scenario. Rather than constructing genuine singularities, we construct a novel phenomenon for the compressible Euler equations in which solutions exhibit arbitrarily strong singular behavior while no actual loss of regularity occurs. The resulting solutions nonetheless display features characteristic of singularity formation, including shock-like gradient concentration and implosion-like spatial localization, while remaining smooth. In particular, the dynamics exhibit a localized amplification phenomenon, and we provide a detailed characterization of the relationship between the magnitude of this amplification and the corresponding time interval. This provides a new perspective on the transition between regular flows and genuine blowup phenomena. We refer to this phenomenon as \emph{quasi-singularity} formation. A detailed description is given in Section~\ref{sec 1.3}.

%%%%%%%%%%%%%%%%%%%%%%%%%%%%%%%%%%%%%%%%%%%%%%%%%%

\subsection{Classical singularity formation for compressible Euler equations}

%%%%%%%%%%%%%%%%%%%%%%%%%%%%%%%%%%%%%%%%%%%%%%%%%%
The compressible Euler equations constitute one of the fundamental models in fluid dynamics, and the formation of singularities has been a central topic of research over several decades. Broadly, singularities in compressible Euler flows can be classified into two main categories. The first is shock formation, in which the primary physical variables (for example, density, velocity, and pressure) remain bounded while their spatial derivatives become unbounded. Physically, this corresponds to wave breaking induced by the intersection of characteristics. The second is the blowup of the primary physical variables themselves. Such singularities are often associated with concentration mechanisms, such as implosion or explosion, in which mass, momentum, or vorticity accumulates in increasingly localized regions of space.

The mathematical study of singularity formation for compressible Euler flows dates back to the classical works of Riemann \cite{Riemann1860}, Lax \cite{Lax1964,Lax1972}, and John \cite{John1974}. Their results established the fundamental mechanism of shock formation in one space dimension: compressive waves steepen in finite time, ultimately causing characteristics to intersect and leading to gradient blowup, while the density, velocity, and pressure themselves stay bounded. In multiple dimensions, while the question of finite-time blowup for the incompressible Euler equations remains a monumental challenge---notably advanced by the landmark numerical investigations of Luo and Hou \cite{LuoHou2014SIAM,LuoHou2014PNAS} on potentially singular solutions for 3D axisymmetric flows with boundaries---the presence of compressibility introduces distinct mechanisms for singularity formation. For multidimensional compressible Euler flows, Sideris \cite{Sideris1985} proved finite-time breakdown of smooth solutions, showing that the $C^{1}$ norm of the solution must become unbounded. The geometric structure of multidimensional shock formation was later clarified by Christodoulou \cite{Christodoulou2007}, who demonstrated that, for suitable smooth initial data, compressive wave focusing drives the formation of a shock surface in finite time; the solution remains continuous and bounded up to the shock, but its first-order spatial derivatives blow up there. More recently, Buckmaster, Shkoller, and Vicol \cite{BuckmasterShkollerVicol2022} constructed stable point shock formation for the three-dimensional isentropic compressible Euler equations. They proved that smooth finite-energy initial data can evolve into a shock in finite time at a single spacetime point, with explicitly computable blowup time and location. The solution remains continuous at the singular point but loses regularity, exhibiting only $C^{1/3}$ H{\"o}lder regularity there. 

Beyond shock formation, Chen, Cialdea, Shkoller, and Vicol \cite{ChenCialdeaShkollerVicol2024}, together with Chen \cite{Chen2024HigherD}, established finite-time vorticity blowup for the compressible Euler equations, driven by an implosion mechanism and exhibiting a coherent geometric concentration of the flow. A different type of singularity was discovered by Merle, Rapha\"el, Rodnianski, and Szeftel \cite{MerleRaphaelRodnianskiSzeftel2022a,MerleRaphaelRodnianskiSzeftel2022b}, who constructed implosion solutions to the three-dimensional compressible Euler equations in which both density and velocity become unbounded in finite time at a single spatial point, together with a detailed description of the associated self-similar blowup structure. Crucially expanding on this line of research, Buckmaster, Cao-Labora, and G{\'o}mez-Serrano \cite{buckmaster2025smooth} established a major milestone by constructing smooth imploding solutions for 3D compressible fluids; their work utilized rigorous computer-assisted proofs to verify the existence of self-similar collapsing profiles under spherical symmetry, providing an innovative algorithmic paradigm for analyzing finite-time singularities. Building directly upon this foundational computer-assisted framework, Cao-Labora, G{\'o}mez-Serrano, Shi, and Staffilani \cite{CaoLabora2025CJM} recently established the existence of non-radial implosion singularities for both compressible Euler and Navier-Stokes equations in $\mathbb{T}^3$ and $\mathbb{R}^3$. By developing a robust framework to handle non-radial perturbations, their work proves that the self-similar concentration of mass and velocity does not strictly require spherical symmetry, thereby demonstrating the structural stability of implosion mechanisms under less restrictive geometric settings and bridging the gap between inviscid and viscous compressible fluids.

This pursuit of fluid singularities and algebraic structures shares a deep conceptual resonance with the broader program initiated by Tao. To investigate the limits of energy estimates and uncover potential blowup mechanics, Tao first established finite-time blowup for supercritical defocusing nonlinear Schr{\"o}dinger equations \cite{tao2016finite}, and subsequently constructed explicit finite-time blowup solutions for an averaged 3D Navier-Stokes equation \cite{tao2016finite_ns} as well as Lagrangian modifications of the 3D Euler equation \cite{tao2016finite_euler}, revealing a fundamental ``supercritical barrier'' in fluid regularity. To systematically search for actual fluid singularities, Tao proposed a visionary framework connecting fluid dynamics with mathematical logic: he demonstrated that a universal Turing machine could be embedded into potential well dynamics \cite{tao2017universality}, and synthesized this philosophical program in \cite{tao2019searching}, suggesting that the search for singularities in the Navier--Stokes equations could be navigated by constructing programmed ``fluid computers'' capable of concentrating vorticity through complex algebraic cascades. For completeness, we also note that complementarily to these singularity mechanisms, rigorous quantitative regularities and global-in-time smooth regimes for relevant dispersive and hydrodynamic systems have been extensively investigated, as evidenced by Tao's quantitative formulation of periodic Navier--Stokes regularity \cite{tao2008quantitative}, the global regularity of wave-Klein--Gordon coupled systems by Ionescu and Pausader \cite{ionescu2019wavekg}, and the global smooth solutions for the 3D gravity-capillary water-wave system established by Deng, Ionescu, Pausader, and Pusateri \cite{deng2017gravity3d}.

 %%%%%%%%%%%%%%%%%%%%%%%%%%%%%%%%%%%%%%%%%%%%%%%%%%

%%%%%%%%%%%%%%%%%%%%%%%%%%%%%%%%%%%%%%%%%%%%%%%%%%

\subsection{Quasi-singularity formation phenomenon}\label{sec 1.3}

%%%%%%%%%%%%%%%%%%%%%%%%%%%%%%%%%%%%%%%%%%%%%%%%%%
In contrast to classical singularity formation scenarios, we introduce a new phenomenon for the isentropic compressible Euler equations \eqref{eq:euler_pressure_velocity}, which we refer to as \emph{quasi-singularity formation}. Roughly speaking, the velocity exhibit arbitrarily large localized amplification while remaining globally smooth. We also establish a precise quantitative relationship between the degree of amplification and the corresponding time interval.

More precisely, let $\{z_j\}_{j=1}^N \subset \mathbb{R}^3$ be an arbitrary finite collection of prescribed points.  For any sufficiently large parameter $M> 0$, we explicitly construct an analytic initial data via Herglotz wave fields. The corresponding solution exhibits the following two properties:
\begin{enumerate}
\item The solution pair $(p,v)$ remains globally $C^1$-smooth on the time interval $[0,T]$. The time interval $T$ is governed by a negative power law of $M$, specifically 
$$
T = C_0 M^{-\frac{3\gamma+1}{\gamma-1}},
$$
where $\gamma > 1$ is the adiabatic index and $C_0 > 0$ is a constant depending solely on $\gamma$, the physical parameter $A$, and the spatial points $\{z_j\}_{j=1}^N$.
\item Throughout $[0, T]$, the velocity field, its spatial gradient, and the pressure gradient undergo synchronous localized amplification at each point $z_j$. All velocity components exceed $M$, and all spatial derivative entries except $\partial_{x_3} v^{(3)}$ are bounded below by $M$. Conversely, $\partial_{x_3} v^{(3)}$ drops below $-5M$, while the pressure gradient entries satisfy $|\partial_{x_k} p| \gtrsim M^{\frac{2\gamma}{\gamma-1}}$.
\end{enumerate}
Consequently, the solution exhibits strongly localized near-singular behavior while preserving global \(C^{1}\)-regularity throughout the evolution.

\begin{figure}
\begin{tikzpicture}[scale=1.4]

    % ==========================
    % ==========================
    \begin{scope}
       
        \draw[->, thin] (-0.5, 0) -- (3.5, 0);
        \draw[->, thin] (0, -0.5) -- (0, 3.5);

        \draw[thick] (0.5, 1) -- (1.5, 1) -- (1.5, 2.5) -- (2.5, 2.5);
        
        \node at (1.5, -0.6) {};
    \end{scope}

    % ==========================
    % ==========================
    \begin{scope}[xshift=5.5cm]
     
        \draw[->, thin] (-0.5, 0) -- (3.5, 0);
        \draw[->, thin] (0, -0.5) -- (0, 3.5);

        \pgfmathsetmacro{\dx}{1.5 / tan(89) / 2}

        \draw[thick, rounded corners=5pt] 
            (0.5, 1) -- 
            (1.5 - \dx, 1) -- 
            (1.5 + \dx, 2.5) -- 
            (2.5, 2.5);

        \node at (1.5, -0.6) {};
    \end{scope}

\end{tikzpicture}
\caption{Singularity vs. quasi-singularity.}
	\label{fig:comparison}
\end{figure}

\begin{table}[h]
\centering
\small
\renewcommand{\arraystretch}{1.4}
\begin{tabular}{l p{5.5cm} p{6.5cm}}
\toprule
& \textbf{Singularity} & \textbf{Quasi-singularity} \\
\midrule
\textbf{Type}
& Shock / Implosion
& Shock-like / Implosion-like \\
\midrule

& Single solution: $|\nabla v|$ or $|\nabla p| \to \infty$ as $t \to T^*$.
& Parameterized family $\{p_M, v_M\}$: $v_M \ge M$, $|\nabla v_M| \ge M$ and $|\nabla p_M| \gtrsim M^{\frac{2\gamma}{\gamma-1}}$ at prescribed points over $[0, T_M]$. \\
\addlinespace

& Fluid concentrates to a point; density blows up as $t \to T^*$.
& Simultaneous amplification localized at prescribed points with time interval $T_M = C_0 M^{-\frac{3\gamma+1}{\gamma-1}}$. \\
\bottomrule
\end{tabular}
\caption{Singularity vs quasi-singularity mechanisms}
\label{tab:singularity_vs_quasi}
\end{table}

This phenomenon of quasi-singularity formation closely mimics, yet structurally departs from, several classical mechanisms of singularity formation in compressible Euler flows along two physical dimensions. First, from the perspective of shock formation, while a classical singularity involves a single solution whose gradient blows up in finite time ($t \to T^*$), our construction captures a smooth {shock-like} regime where the velocity and its spatial gradients for a solution family $\{v_M, p_M\}$ undergo strong, controlled amplification ($v_M \ge M$ , $|\nabla v_M| \ge M$, $|\nabla p_M| \gtrsim M^{\frac{2\gamma}{\gamma-1}}$) while maintaining global $C^1$ smoothness over a precise time interval $[0, T_M]$. Second, regarding the implosion mechanism, instead of classical fluid concentration and density blowup at a single singular point, we introduce an {implosion-like} behavior. Here, the solution family does not collapse to a physical singularity in time, but rather exhibits simultaneous amplification localized at $N$ prescribed points over the explicit time interval $T_M = C_0 M^{-\frac{3\gamma+1}{\gamma-1}}$. A simplified visual and structural summary of these physical and mathematical distinctions is systematically presented in Table~\ref{tab:singularity_vs_quasi}.

To complement the structural comparison, a direct geometric illustration of these dynamics is presented in Figure~\ref{fig:comparison}. The classical singularity (left) features a sharp gradient blow-up where the vertical jump leads to an infinite derivative ($+\infty$) and a subsequent loss of regularity. Conversely, the quasi-singularity (right) is characterized by a globally smooth ($C^\infty$) curve that closely mimics this blowup behavior through an exceptionally steep transition layer, keeping the derivative sufficiently large but strictly finite.

\subsection{Main results.}
Our main result constructs the quasi-singularity phenomenon for the isentropic compressible Euler equations.

\begin{theorem}[Quasi-singularity formation]
\label{thm:quasi-singularity}
Let $M \gg 1$ be an arbitrarily large parameter, $N \ge 1$ be an integer, and let $\{z_j\}_{j=1}^N \subset \mathbb{R}^3$ be a prescribed collection of $N$ pairwise distinct  points. 

Consider the initial data $(p_{\mathrm{in}}, v_{\mathrm{in}})$ for the isentropic compressible Euler system \eqref{eq:euler_pressure_velocity}, explicitly constructed via a real-valued Herglotz wave function $H_g(x) = \int_{\mathbb{S}^2} e^{i x \cdot d} g(d) \, \mathrm{d}S(d)$ as
\begin{equation} \label{eq:initial data --}
\begin{cases}
p_{\mathrm{in}}(x) = p_0 - 2\sqrt{\gamma \rho(p_0) p_0} \, M H_g(x), \\[1ex]
v_{\mathrm{in}}(x) = 2M \nabla H_g(x),
\end{cases}
\end{equation}
where $p_0 > 0$ is a constant defined in \eqref{eq:setting of p0}, and the Herglotz kernel $g \in L^2(\mathbb{S}^2)$ is given by
\begin{equation*}
g := \mathcal{L}^* G^{-1} C, \quad C = (c_j)_{j=1}^N \in \mathbb{R}^{9N}, \quad c_j = (1,1,1,1,1,1,1,1,1),
\end{equation*}
with $\mathcal{L}^*$ and $G$ being defined in Lemma~\ref{lem:herglotz-control}.

Then, there exists a time interval $T = T(M) > 0$ such that the Cauchy problem for system \eqref{eq:euler_pressure_velocity} admits a unique classical solution $(p, v) \in C^1([0, T] \times \mathbb{R}^3)$ satisfying the following properties:
\begin{enumerate}
\item[(i)] \textbf{Explicit Power-Law Decay Rate of Time Interval $T$:} The time interval $T$ obeys an explicit power-law decay rate with respect to $M$:
    \begin{equation} \label{eq:time-interval-rate}
    T = C_0 \, M^{-\frac{3\gamma+1}{\gamma-1}},
    \end{equation}
    where $C_0 > 0$ is a constant depending solely on the adiabatic index $\gamma$, physical constants $A$, and the geometric configuration $\{z_j\}_{j=1}^N$.
  \item[(ii)] \textbf{Simultaneous Multi-point Localization:} Throughout the time interval $[0, T]$, the velocity vector field $v = (v^{(1)}, v^{(2)}, v^{(3)})^\top$, its spatial gradient $\nabla v$, and the pressure gradient $\nabla p$ undergo simultaneous localized amplification at every point $z_j$ ($j = 1, \dots, N$). Specifically, for all $t \in [0, T]$ and $j \in \{1, \dots, N\}$, the following pointwise lower bounds hold:
\begin{align}
    v^{(i)}(z_j, t) &\ge M, && \text{for all } i \in \{1, 2, 3\}, \label{eq:vel-amp} \\[0.5ex]
    \partial_{x_k} v^{(i)}(z_j, t) &\ge M, && \text{for all } (i, k) \in \{1, 2, 3\}^2 \setminus \{(3, 3)\}, \label{eq:grad-amp-pos} \\[0.5ex]
    \partial_{x_3} v^{(3)}(z_j, t) &\le -5M, && \label{eq:grad-amp-neg}\\[0.5ex]
    \partial_{x_k} p(z_j, t)&\ge C_1 M^{\frac{2\gamma}{\gamma-1}}, && \text{for all } k \in \{1, 2, 3\}. \label{eq:press-grad-amp}
\end{align}
where $C_1$ is a constant depending only on $ \gamma, A $ and $\{z_j\}_{j=1}^N$.
\end{enumerate}
\end{theorem}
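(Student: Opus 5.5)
The plan is to treat the solution as a short-time perturbation of its initial data. On $[0,T]$ with $T$ tiny, the solution at each $z_j$ stays close to its value at $t=0$, and the lower bounds are inherited from the initial data. The construction follows that logic.

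\textbf{Step 1: properties of the initial data at the points $z_j$.} I would use Lemma~\ref{lem:herglotz-control}, which I expect to state that the map $\mathcal{L}: g\mapsto (\nabla H_g(z_j),\nabla^2 H_g(z_j))$ (or the nine relevant entries) has Gram matrix $G=\mathcal{L}\mathcal{L}^*$ invertible for distinct points. With $g=\mathcal{L}^*G^{-1}C$ we get $\mathcal{L}g=C$, so the prescribed first and second derivatives of $H_g$ at each $z_j$ are fixed $O(1)$ numbers. Since $H_g$ solves $\Delta H_g+H_g=0$, the trace of $\nabla^2H_g(z_j)$ equals $-H_g(z_j)$. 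This identity is presumably why $\partial_{x_3}v^{(3)}$ cannot be positive together with the others and is instead forced below $-5M$: the diagonal entries are tied to $H_g(z_j)$.

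So I would read off $v_{\mathrm{in}}^{(i)}(z_j)=2M\,\partial_iH_g(z_j)\ge 2M$ and $\partial_kv^{(i)}_{\mathrm{in}}(z_j)=2M\,\partial_{ik}H_g(z_j)$, which has the sign pattern with margin $2M$ (respectively $\le -10M$). For the pressure, $\partial_kp_{\mathrm{in}}=-2\sqrt{\gamma\rho(p_0)p_0}\,M\,\partial_kH_g$. Here $p_0$ from \eqref{eq:setting of p0} must be chosen as a power of $M$, namely $p_0\sim M^{2\gamma/(\gamma-1)}$. This makes $p_{\mathrm{in}}>0$ uniformly, because $\sqrt{\rho(p_0)p_0}\sim p_0^{(\gamma+1)/(2\gamma)}\ll p_0$ once $p_0\gg M^{2\gamma/(\gamma-1)}$, and it gives $|\nabla p_{\mathrm{in}}(z_j)|\gtrsim M\,p_0^{(\gamma+1)/(2\gamma)}$. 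Matching this with $M^{2\gamma/(\gamma-1)}$ fixes the exponent of $p_0$. I would verify this bookkeeping, including the sign convention giving $\partial_kp\ge0$ (which requires $\partial_kH_g(z_j)<0$ or a sign flip in $C$), against the definition.

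\textbf{Step 2: local existence with a quantitative lifespan.} I would symmetrize the system in the usual way, using the variable $\pi\propto p^{(\gamma-1)/(2\gamma)}$ (sound speed) or working directly with the symmetric hyperbolic form for $(p,v)$ with positive-definite $A_0(p)$. Since the data are analytic and bounded with all derivatives (Herglotz functions are bounded but not $L^2$), I would work in uniformly local Sobolev spaces $H^s_{ul}$, $s>5/2$, or alternatively in $C^1$ via characteristics. The standard energy estimate
\begin{equation*}
\frac{d}{dt}E_s\lesssim \bigl(\|\nabla v\|_{L^\infty}+c\,\|\nabla \log p\|_{L^\infty}\ldots\bigr)E_s
\end{equation*}
gives existence on $[0,T_*]$ with $T_*\gtrsim (\text{size of } \|\nabla U\|_{L^\infty} \text{ in the natural scaling})^{-1}$, where the sound speed $c\sim p_0^{(\gamma-1)/(2\gamma)}\sim M$ enters.

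\textbf{Step 3: pointwise stability.} For $t\le T$ I would write $U(z_j,t)-U(z_j,0)=\int_0^t\partial_tU$. The size of $\partial_tv$ is $|v\cdot\nabla v|+|\nabla p/\rho|$. The size of $\partial_t\nabla v$ involves second derivatives times the speeds, roughly $M\cdot(M+c)\cdot p_0$-weighted factors. I would bound the largest of these by $M^{\alpha}$ and require $T M^{\alpha}\le M$ for $v,\nabla v$, and analogously $T\cdot|\partial_t\nabla p|\le \tfrac12 C_1M^{2\gamma/(\gamma-1)}$. The binding constraint should produce exactly $T=C_0M^{-(3\gamma+1)/(\gamma-1)}$. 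Note $(3\gamma+1)/(\gamma-1)=1+2\cdot\frac{2\gamma}{\gamma-1}-\ldots$, so I would check that the second-derivative energy bound, which is quadratic in the pressure scale, produces it.

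\textbf{Main obstacle.} The main obstacle is Step 2/3 done quantitatively: obtaining $M$-explicit bounds on the second derivatives of the solution, not just the data, on $[0,T]$ in a non-decaying setting. For this I would use a bootstrap. Assume $\|\nabla^kU\|_{L^\infty}\le 2\|\nabla^kU_{\mathrm{in}}\|_{L^\infty}$ for $k\le 2$, close it via the weighted energy estimates on unit balls with finite propagation speed $\lesssim M$, and then plug the result into the time-integral estimate of Step 3.
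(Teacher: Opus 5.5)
Your route differs from the paper's and is sound. However, one item of the statement cannot be proved as written; see the second paragraph.

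\textbf{Comparison with the paper.} The paper never estimates the full solution directly. It subtracts the explicit solution $(p_h,v_h)$ of the acoustic linearization about $(p_0,0)$, which coincides with the data at $t=0$, so the remainder $(q,w)$ starts from zero. It then runs a Picard iteration for $(q,w)$ in $H^3_{\mathrm{ul}}$, using the localized Friedrichs estimate of Lemma~\ref{lemma:local-stability} with the unbalanced symmetrizer $\mathrm{diag}((\gamma p)^{-1},\rho I_3)$. Finally it only needs $\|(q,w)\|_{C^1}\le M$, compared against the profile values at $z_j$ (at least $2M$, at most $-6M$, and about $-2\sqrt{\gamma\rho(p_0)p_0}\,M$).

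The exponent $\frac{3\gamma+1}{\gamma-1}$ comes from that symmetrizer, not from the dynamics. There $C_I^{-1}\simeq M^{2\gamma/(\gamma-1)}$ and $\Lambda_A\lesssim M^{(\gamma+1)/(\gamma-1)}$ force $\mathfrak{R}\gtrsim M^{(3\gamma+1)/(\gamma-1)}$, hence $T\lesssim\mathfrak{R}^{-1}$. So do not expect your binding constraint to reproduce it. In the sound-speed variable (identity symmetrizer, all quantities and first derivatives of size $M$), your bootstrap closes for $t\lesssim M^{-1}\simeq\omega_0^{-1}$. The stated $T$ is then just an admissible smaller choice.

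In fact your main obstacle disappears. With $p_0=CM^{2\gamma/(\gamma-1)}$ one has $p_{\mathrm{in}}=M^{2\gamma/(\gamma-1)}\hat p_{\mathrm{in}}$ and $v_{\mathrm{in}}=M\hat v_{\mathrm{in}}$. Here $\hat p_{\mathrm{in}}=C-2\sqrt{\gamma}A^{-1/(2\gamma)}C^{(\gamma+1)/(2\gamma)}H_g$ (positive for the paper's large $C$) and $\hat v_{\mathrm{in}}=2\nabla H_g$ do not depend on $M$. The system is invariant under $(p,v)(x,t)\mapsto(\lambda^{2\gamma/(\gamma-1)}p,\lambda v)(x,\lambda t)$. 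Hence $(p,v)(x,t)=(M^{2\gamma/(\gamma-1)}\hat p,M\hat v)(x,Mt)$ for one fixed solution $(\hat p,\hat v)$. Qualitative $H^s_{\mathrm{ul}}$ well-posedness for that single problem, plus continuity of $\hat v,\nabla\hat v,\nabla\hat p$ at $(z_j,0)$, gives every bound on $[0,s_1/M]\supset[0,C_0M^{-(3\gamma+1)/(\gamma-1)}]$.

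What each approach buys: the paper's decomposition gives an explicit leading term and needs only $C^1$ control of the remainder, with no second-derivative bookkeeping as in your Step 3. But since $\omega_0T\to0$, that leading term is frozen at its initial values on $[0,T]$. So the paper ends up proving no more than your continuity argument, at the price of the artificial exponent.

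\textbf{Corrections.} Your sign worry in Step 1 is justified, and there is no fix. Since $\nabla p_{\mathrm{in}}=-2\sqrt{\gamma\rho(p_0)p_0}\,M\nabla H_g$ and $v_{\mathrm{in}}=2M\nabla H_g$ are antiparallel, flipping $C$ (or asking for $\partial_{x_k}H_g(z_j)<0$) destroys the velocity bounds. With $\partial_{x_k}H_g(z_j)=1$ one has $\partial_{x_k}p(z_j,0)=-2\sqrt{\gamma\rho(p_0)p_0}\,M<0$. So the pressure-gradient bound with $C_1>0$ is false already at $t=0$.

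What your argument proves is $\partial_{x_k}p(z_j,t)\le-C_1M^{2\gamma/(\gamma-1)}$, i.e.\ the bound on $|\partial_{x_k}p|$ announced in the introduction. The paper's proof also yields exactly this, despite the sign in its last display: there $\nabla p_h(z_j,t)\propto\sin\omega_0t-\cos\omega_0t<0$ on $[0,T]$ and $|\nabla q|\le M$. State and prove the bound in that form instead of leaving it to be checked.

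Next, $\partial_{x_3}^2H_g(z_j)=-H_g(z_j)-\partial_{x_1}^2H_g(z_j)-\partial_{x_2}^2H_g(z_j)=-3$. So $\partial_{x_3}v^{(3)}_{\mathrm{in}}(z_j)=-6M$, not $\le-10M$. The margin to $-5M$ is $M$, the same as for every other entry, which is exactly what your drift requirement $\le M$ provides.

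Finally, the theorem also asserts uniqueness among $C^1$ solutions. Add the standard local energy argument on shrinking balls; the paper likewise only cites it.
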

\begin{remark}
The initial data constructed in \eqref{eq:initial data --} are smooth and, in fact, real-analytic, owing to the analyticity of the Herglotz function $H_g$. The constant $C_0$ in \eqref{eq:time-interval-rate} depends explicitly on the geometric configuration of the points $\{z_j\}_{j=1}^N$. Specifically, increasing the number of points $N$ or placing them closer together results in a smaller constant $C_0$, and vice versa. 
\end{remark}

\begin{remark}
The mechanism underlying the construction of shock-like and implosion-like quasi-singularities is twofold, combining a carefully designed solution to the linearized compressible Euler equations with quantitative estimates and control for the underlying quasilinear hyperbolic system. Specifically, we adopt the ansatz
\begin{equation*}
    p = p_0 + p_h + q, \qquad v = v_h + w,
\end{equation*}
where $(p_h, v_h)$ denotes the principal component satisfying the linearized compressible Euler equations \eqref{eq:linear-acoustic}, and $(q, w)$ represents the nonlinear remainder term. By leveraging the far-field patterns of fundamental solutions to the Helmholtz equation, we delicately construct a Herglotz kernel that enables precise pointwise control of the Herglotz wave field at multiple prescribed locations. This wave field is then utilized to generate the linear profile $(p_h, v_h)$ exhibiting localized amplification of scale at least $M$.

To ensure that this linear profile remains dominant and that the localized amplification phenomenon is faithfully preserved, the remainder terms $(q, w)$ must be kept strictly subdominant relative to $M$. Because nonlinear error accumulates over time, controlling the remainder requires restricting the time interval $T$ to the order of $M^{-\frac{3\gamma+1}{\gamma-1}}$, which reflects the finite-time propagation dynamics of the compressible Euler system. This highlights a fundamental trade-off: achieving a more pronounced localized amplification (larger $M$) necessitates a shorter time interval $T$, and vice versa.
\end{remark}

\subsection{Notation}
For convenience, we adopt the following standard asymptotic notation throughout this paper. For positive quantities $x$ and $y$, we write
\begin{align*}
    x \lesssim y &\iff x \le C y, \\
    x \gtrsim y &\iff x \ge C y, \\
    x \simeq y &\iff C_1 y \le x \le C_2 y,
\end{align*}
where $C, C_1, C_2 > 0$ are generic positive constants depending only on the adiabatic exponent $\gamma$, the physical constant $A$, and the geometric configuration $\{z_j\}_{j=1}^N$, but strictly independent of the amplification parameter $M$.

\subsection{Organization}
The rest of this paper is organized as follows. Section~\ref{sec:acoustic} is devoted to the construction and detailed analysis of profiles for the linearized compressible Euler equations that exhibit localized amplification, along with the specification of the associated initial data. Section~\ref{sec:remainder} establishes the quantitative estimates and control for the underlying quasilinear hyperbolic system, culminating in the complete proof of Theorem~\ref{thm:quasi-singularity}.

%%%%%%%%%%%%%%%%%%%%%%%%%%%%%%%%%%%%%%%%%%%%%%%%%%
%%%%%%%%%%%%%%%%%%%%%%%%%%%%%%%%%%%%%%%%%%%%%%%%%%
\section{Customized Acoustic Profiles and Construction of Initial Data for System \eqref{eq:euler_pressure_velocity}}\label{sec:acoustic}

\subsection{Construction of Herglotz wave field with customized value at prescribed points.}\label{subsec: Herglotz function}
In the following lemma, we construct an explicit Herglotz wave field whose value, as well as its first- and second-order derivatives, can be explicitly prescribed at a finite set of points. The construction relies on the structural properties of Herglotz wave functions and the far-field patterns of the fundamental solution to the Helmholtz equation, combined with Rellich's lemma and standard linear algebra arguments.

\begin{lemma}[Exact Pointwise Control of Herglotz Waves]
\label{lem:herglotz-control}
Let $N\geq 1$, and let
\[
\bigl\{z_j \bigr\}_{j=1}^N\subset\mathbb R^3
\]
be $N$ pairwise distinct points. For $g\in L^2(\mathbb S^2)$, define the Herglotz wave function
\begin{equation}
\label{eq:herglotz-definition}
H_g(x):=\int_{\mathbb S^2}e^{ix\cdot d}g(d)\,dS(d),
\qquad x\in\mathbb R^3,
\end{equation}
where $dS$ is the standard surface measure on $\mathbb S^2$.
For any prescribed block vector $C=(c_j)_{j=1}^N\in\mathbb C^{9N}$ with components
\[
c_j=(c_{j,0},c_{j,1},\ldots,c_{j,8})\in\mathbb C^9,
\qquad j=1,\ldots,N,
\]
there exists an explicit Herglotz kernel 
\[
g := \mathcal L^* G^{-1} C,
\]
where $\mathcal L^*$ and $G$ are defined in \eqref{eq:L*-definition} and \eqref{eq:Gram-definition}, respectively, such that the corresponding Herglotz wave function $H_g$ satisfies 
\begin{equation}
\label{eq:general-interpolation}
\begin{aligned}
H_g(z_j) &= c_{j,0}, & \partial_{x_k} H_g(z_j) &= c_{j,k}, \quad k=1,2,3, \\
\partial_{x_1}^2 H_g(z_j) &= c_{j,4}, & \partial_{x_1}\partial_{x_2} H_g(z_j) &= c_{j,5}, \\
\partial_{x_1}\partial_{x_3} H_g(z_j) &= c_{j,6}, & \partial_{x_2}^2 H_g(z_j) &= c_{j,7}, \\
\partial_{x_2}\partial_{x_3} H_g(z_j) &= c_{j,8}, & & \qquad\qquad j=1,\ldots,N.
\end{aligned}
\end{equation}
Kernel $g$ satisfies
\begin{equation}
\label{eq:norm-bounds}
\frac{\|C\|_2}{\sqrt{\lambda_{\max}(G)}}
\leq \|g\|_{L^2(\mathbb S^2)}
\leq \frac{\|C\|_2}{\sqrt{\lambda_{\min}(G)}},
\end{equation}
where $||C||_2$ denotes the standard Euclidean norm on $\mathbb C^{9N}$, and $G$ is the positive definite matrix introduced below. Here, $\lambda_{\max}(G)$ and $\lambda_{\min}(G)$ denotes the maximum eigenvalue and minimum eigenvalue of $G$, respectively. These quantities
depend only on $N$ and the point configuration, and are
independent of $C$. Furthermore, if every $c_j$ is real,
then kernel $g$ satisfies $g(-d)=\overline{g(d)}$ and $H_g$ is
real-valued throughout $\mathbb R^3$.

In particular, taking
\begin{equation}
\label{eq:special-data}
c_j=(1,1,1,1,1,1,1,1,1),\qquad
j=1,2,...,N,
\end{equation}
gives a real-valued Herglotz wave satisfying
\begin{equation}
\label{eq:special-interpolation}
\begin{aligned}
  H_g(z_j) = 1, \qquad 
  \partial_{x_k} H_g(z_j) = 1, \quad &k = 1, 2, 3, \\
  \partial_{x_k}\partial_{x_\ell} H_g(z_j) = 1, \quad &(k,\ell) \in \{1,2,3\}^2 \setminus \{(3,3)\}.
\end{aligned}
\end{equation}
For these data,
\begin{equation}
\label{eq:special-norm-bounds}
0<\lambda_*:=\sqrt{\frac{9N}{\lambda_{\max}(G)}}
\leq\|g\|_{L^2(\mathbb S^2)}
\leq\sqrt{\frac{9N}{\lambda_{\min}(G)}}=: \Lambda_*<\infty.
\end{equation}
Thus $\lambda_*$ and $\Lambda_*$ depend only on $N$ and
$\{z_j\}_{j=1}^N$.
\end{lemma}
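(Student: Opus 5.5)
The plan is to recast the interpolation conditions \eqref{eq:general-interpolation} as a finite family of bounded linear functionals on $L^2(\mathbb S^2)$ and to solve the resulting underdetermined linear system by the minimum-norm (Gram matrix) construction. Differentiating under the integral in \eqref{eq:herglotz-definition}, each of the nine quantities at $z_j$ takes the form $\int_{\mathbb S^2} e^{iz_j\cdot d}\,m_a(d)\,g(d)\,dS(d)$, with monomials
\[
m_0=1,\quad m_k=i d_k\ (k=1,2,3),\quad m_4=-d_1^2,\ m_5=-d_1d_2,\ m_6=-d_1d_3,\ m_7=-d_2^2,\ m_8=-d_2d_3 .
\]
Set $\phi_{j,a}(d):=\overline{e^{iz_j\cdot d}m_a(d)}$. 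Then each condition reads $\langle g,\phi_{j,a}\rangle_{L^2(\mathbb S^2)}=c_{j,a}$. This defines $\mathcal L:L^2(\mathbb S^2)\to\mathbb C^{9N}$, $\mathcal Lg=(\langle g,\phi_{j,a}\rangle)_{j,a}$, whose adjoint is $\mathcal L^*C=\sum_{j,a}c_{j,a}\phi_{j,a}$. The Gram matrix is $G=\mathcal L\mathcal L^*$, with entries $G_{(j,a),(l,b)}=\langle\phi_{l,b},\phi_{j,a}\rangle$. It is Hermitian and positive semidefinite. If $G$ is invertible, then $g=\mathcal L^*G^{-1}C$ satisfies $\mathcal Lg=GG^{-1}C=C$, which is exactly \eqref{eq:general-interpolation}.

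The norm bounds then reduce to linear algebra. Compute
\[
\|g\|^2=\langle \mathcal L^*G^{-1}C,\mathcal L^*G^{-1}C\rangle=\langle G^{-1}C,C\rangle_{\mathbb C^{9N}},
\]
and apply the Rayleigh quotient bounds for $G^{-1}$ to obtain \eqref{eq:norm-bounds}. Specializing to $\|C\|_2^2=9N$ for the data \eqref{eq:special-data} gives \eqref{eq:special-norm-bounds} directly.

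The main obstacle is positive definiteness of $G$, i.e. linear independence of the $9N$ functions $\phi_{j,a}$ on $\mathbb S^2$. I would argue as follows. Suppose $\sum_{j,a}\beta_{j,a}e^{-iz_j\cdot d}\overline{m_a(d)}=0$ on $\mathbb S^2$. Consider the distribution $u=\sum_{j,a}\beta_{j,a}\,\overline{D_a}\Phi(\cdot-z_j)$, where $\Phi(x)=e^{i|x|}/(4\pi|x|)$ is the outgoing fundamental solution of $\Delta+1$ and the $D_a$ are the corresponding derivative operators with appropriate signs. Each derivative of a translated $\Phi$ radiates with far-field pattern $e^{-i\hat x\cdot z_j}$ times the matching monomial in $\hat x$, so the hypothesis says the far-field pattern of $u$ vanishes. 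By Rellich's lemma together with unique continuation, $u\equiv0$ in $\mathbb R^3\setminus\{z_1,\dots,z_N\}$. Near each $z_j$, the other terms are smooth, so the singular part $\sum_a\beta_{j,a}D_a\Phi(\cdot-z_j)$ must be smooth there. Because the $D_a$ are $1,\partial_1,\partial_2,\partial_3,\partial_1^2,\partial_1\partial_2,\partial_1\partial_3,\partial_2^2,\partial_2\partial_3$ and omit $\partial_3^2$, the leading singularities ($\partial^2|x|^{-1}$, $\partial|x|^{-1}$, $|x|^{-1}$) are homogeneous of distinct degrees. Within the second-order group, the functions $\partial_k\partial_\ell(1/|x|)$ satisfy only the single relation that their trace vanishes, which involves $\partial_3^2$, so the remaining five are independent. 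Peeling off one degree at a time forces all $\beta_{j,a}=0$. I would take some care with the lower-order contributions of the $e^{i|x|}$ factor, since these mix degrees, but working from the highest homogeneity down handles them.

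Finally, for reality when the $c_j$ are real, I would use uniqueness of the minimum-norm solution. The map $g\mapsto\tilde g(d):=\overline{g(-d)}$ satisfies $H_{\tilde g}=\overline{H_g}$ and preserves the $L^2$ norm. Hence $\tilde g$ solves the conjugated system, which coincides with the original one when $C$ is real. Since $g=\mathcal L^*G^{-1}C$ is the unique minimum-norm element of $\{\mathcal Lg=C\}$ (it lies in $(\ker\mathcal L)^\perp$), we get $\tilde g=g$. Therefore $g(-d)=\overline{g(d)}$ and $H_g$ is real-valued. Taking $c_j$ as in \eqref{eq:special-data} yields \eqref{eq:special-interpolation}.
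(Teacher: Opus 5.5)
Your proposal is correct and follows the paper's proof essentially step for step: the same evaluation operator $\mathcal L$ built from the functions $\phi_{z_j,m}$, the minimum-norm construction $g=\mathcal L^*G^{-1}C$ with Rayleigh-quotient bounds obtained from $\|g\|^2=C^*G^{-1}C$, and linear independence via a vanishing far field, Rellich's lemma, and peeling off the $\rho^{-3},\rho^{-2},\rho^{-1}$ singularities at each $z_j$ (where omitting $\partial_{x_3}^2$ avoids the trace relation). The only deviation is the reality step: you argue via uniqueness of the minimum-norm solution under the symmetry $g\mapsto\overline{g(-d)}$, whereas the paper observes that $\phi_{z,m}(-d)=\overline{\phi_{z,m}(d)}$ makes $G$ real symmetric, so $\beta=G^{-1}C$ is real; both arguments are valid.
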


\begin{proof} We divide the proof into 4 steps.

\textbf{Step 1: Multi-point evaluation operator and the far-field patterns.}
We use inner products that are linear in the first argument:
$$
\langle h,f\rangle_{L^2(\mathbb S^2)}
=\int_{\mathbb S^2}h(d)\overline{f(d)}\,dS(d),
\qquad
\langle A,B\rangle_{\mathbb C^{9N}}
=\sum_{j=1}^N\sum_{m=0}^8
A_{j,m}^{(r)}\overline{B_{j,m}^{(r)}}.
$$
To be consistent with the notation for $C\in\mathbb C^{9N}$ introduced above, we identify each vector $A,B\in\mathbb C^{9N}$ with the indexed collections of their components,$
A=\bigl(A_{j,m}^{(r)}\bigr)_{\substack{1\le j\le N\\\\0\le m\le 8}},
 B=\bigl(B_{j,m}^{(r)}\bigr)_{\substack{1\le j\le N\\\\0\le m\le 8}}.
$
For notational convenience, we introduce the following differential operators:
\begin{equation}
\label{eq:derivative-list}
(D_0,D_1,\ldots,D_8)
:=(1,\partial_{x_1},\partial_{x_2},\partial_{x_3},
\partial_{x_1}^2,\partial_{x_1}\partial_{x_2},
\partial_{x_1}\partial_{x_3},\partial_{x_2}^2,
\partial_{x_2}\partial_{x_3}).
\end{equation}
Here $D_0$ denotes the identical operator. $D_{m,z}$ denotes the same differential operator acting on $z$.
For $z\in\mathbb R^3$ and $d\in\mathbb S^2$, define
\begin{equation}
\label{eq:patterns}
\begin{aligned}
\phi_{z,0}(d)&:=e^{-iz\cdot d},\\
(\phi_{z,1}(d),\ldots,\phi_{z,8}(d))
&:=e^{-iz\cdot d}
\bigl(-id_1,-id_2,-id_3,-d_1^2,-d_1d_2,
-d_1d_3,-d_2^2,-d_2d_3\bigr).
\end{aligned}
\end{equation}
Thus $\phi_{z,m}=D_{m,z}e^{-iz\cdot d}$ for $m=0,1,\ldots,8$.
Differentiation under the integral sign gives
\[
H_h(z)=D_0H_h(z)=\langle h,\phi_{z,0}\rangle,
\qquad
D_mH_h(z)=\langle h,\phi_{z,m}\rangle,
\quad m=1,\ldots,8.
\]
Define the bounded linear operator
\begin{equation}
\label{eq:L-definition}
\begin{aligned}
\mathcal L:L^2(\mathbb S^2)&\longrightarrow\mathbb C^{9N},\\
\mathcal Lh
&:=\bigl(D_mH_h(z_j)\bigr)_{j,m}
=\bigl(\langle h,\phi_{z_j,m}\rangle\bigr)_{j,m},
\end{aligned}
\end{equation}
where $j=1,\ldots,N$, and $m=0,1,\ldots,8$.
Its adjoint operator is
\begin{equation}
\label{eq:L*-definition}
\begin{aligned}
\mathcal L^*A:\mathbb C^{9N}&\longrightarrow L^2(\mathbb S^2) ,\\
\mathcal L^*A
&:
=\sum_{j=1}^N\sum_{m=0}^8
A_{j,m}\phi_{z_j,m},
\end{aligned}
\end{equation}
Set
\begin{equation}
\label{eq:Gram-definition}
G=\mathcal L\mathcal L^*:\mathbb C^{9N}\longrightarrow \mathbb C^{9N},\qquad
G_{(j,m),(i,n)}
=\bigl\langle\phi_{z_i,n},\phi_{z_j,m}\bigr\rangle,
\end{equation}
where $G_{(j,m),(i,n)}$ denotes the entry of $G$ in the row indexed by $(j,m)$ and the column indexed by $(i,n)$. $\phi_{z_i,n}$ and $\phi_{z_j,m}$ are defined in \eqref{eq:patterns}. We next introduce the outgoing fundamental solution of the Helmholtz equation,
\[
\Phi(x,z)=\frac{e^{i|x-z|}}{4\pi|x-z|},\qquad x\ne z.
\]
With $R=|x|$ and $d=x/R$, direct expansion gives, uniformly in
$ d \in\mathbb S^2$,
\begin{equation}
\label{eq:far-field-expansions}
\begin{aligned}
\Phi(x,z)
&=\frac{e^{iR}}{4\pi R}
\left(e^{-i z\cdot d }+O(R^{-1})\right),\\
\partial_{z_k}\Phi(x,z)
&=\frac{e^{iR}}{4\pi R}
\left(-id_k e^{-iz\cdot d }+O(R^{-1})\right),\\
\partial_{z_k}\partial_{z_\ell}\Phi(x,z)
&=\frac{e^{iR}}{4\pi R}
\left(-d_kd_\ell e^{-iz\cdot d }
+O(R^{-1})\right).
\end{aligned}
\end{equation}
Recall that the far-field pattern $u_\infty$ of a radiating solution $u$ is defined through the asymptotic behavior
$$
u(x)=\frac{e^{iR}}{R}\left(u_\infty(d)+O(R^{-1})\right),
\qquad R\to\infty.
$$
It therefore follows from \eqref{eq:far-field-expansions} that
$\phi_{z,0}$ and $\phi_{z,m}$ are precisely the far-field patterns of
$4\pi\Phi(\cdot,z)$ and $4\pi D_{m,z}\Phi(\cdot,z)$, respectively.

\medskip
\textbf{Step 2: Linear Independence of Far-Field Patterns and Positive Definiteness of $G$.}
For every $A\in\mathbb C^{9N}$,
\begin{equation}
\label{eq:Gram-quadratic-form}
A^*GA
=\langle\mathcal L\mathcal L^*A,A\rangle
=\|\mathcal L^*A\|_{L^2(\mathbb S^2)}^2,
\end{equation}
where $A^*$ denotes the conjugate transpose of $A$. Hence, in view of the representation of $\mathcal{L}^*$ in \eqref{eq:L*-definition}, $G$ is positive definite if and only if $\mathcal{L}^*$ is injective. This is equivalent to the linear independence of the family $\{\phi_{z_j,m}:j=1,\ldots,N,\ m=0,1,\ldots,8\}$.

Suppose that there exist coefficients $a_{j,m}$ ($j=1,\dots,N$, $m=0,\dots,8$) such that 
\begin{equation}
\label{eq:vanishing-combination}
  \sum_{j=1}^N \sum_{m=0}^8 a_{j,m} \phi_{z_j,m}(d) = 0, \quad \text{for all } d \in \mathbb{S}^2.
\end{equation}
Define
\begin{equation}
\label{eq:outgoing-field}
U(x):=4\pi\sum_{j=1}^N\sum_{m=0}^8
a_{j,m}
\left.D_{m,z}\Phi(x,z)\right|_{z=z_j},
\qquad x\in\mathbb R^3\setminus\mathcal Z,
\end{equation}
where $\mathcal Z:=\{z_j\}_{j=1}^N$.
Then $U$ is outgoing, $(\Delta+1)U=0$ in
$\mathbb R^3\setminus\mathcal Z$, and its far-field pattern is zero by
\eqref{eq:far-field-expansions} and \eqref{eq:vanishing-combination}.
In particular, $U(x)=O(|x|^{-2})$ as $|x|\to\infty$, and hence
\[
\lim_{R\to\infty}\int_{|x|=R}|U(x)|^2\,dS(x)=0.
\]
Let
\[
0<\varepsilon<\frac13
\min_{\substack{z,\widetilde z\in\mathcal Z\\z\ne\widetilde z}}
|z-\widetilde z|,
\qquad D_\varepsilon:=\bigcup_{z\in\mathcal Z}B_\varepsilon(z).
\]
The exterior $\mathbb R^3\setminus\overline{D_\varepsilon}$ is connected.
Rellich's lemma \cite{ColtonKress} in its connected exterior-domain form therefore yields
$U=0$ in $\mathbb R^3\setminus\overline{D_\varepsilon}$.
For each fixed $x\notin\mathcal Z$, we may additionally choose
$\varepsilon<\operatorname{dist}(x,\mathcal Z)$.
It follows that
\begin{equation}
\label{eq:U-zero}
U=0\qquad\text{in }\mathbb R^3\setminus\mathcal Z.
\end{equation}
Fix $z_0=z_{j_0}$ for some $j_0=1,\ldots,N$, and write
$x=z_0+\rho w$, where $\rho>0$ and $w\in\mathbb S^2$. Near $z_0$, we have
\begin{equation}
\label{eq:singular-expansions}
\begin{aligned}
\Phi(z_0+\rho w,z_0)&=\frac{e^{i\rho}}{4\pi\rho},\\
\left.\partial_{z_k}\Phi(z_0+\rho w,z)\right|_{z=z_0}
&=\frac{e^{i\rho}}{4\pi}w_k
\left(\frac1{\rho^2}-\frac{i}{\rho}\right),\\
\left.\partial_{z_k}\partial_{z_\ell}
\Phi(z_0+\rho w,z)\right|_{z=z_0}
&=\frac{e^{i\rho}}{4\pi}
\left[
\frac{3w_kw_\ell-\delta_{k\ell}}{\rho^3}
-\frac{i(3w_kw_\ell-\delta_{k\ell})}{\rho^2}
-\frac{w_kw_\ell}{\rho}
\right].
\end{aligned}
\end{equation}
All terms in $U$ centered at points distinct from $z_0$ remain bounded as $\rho\downarrow0$. For the fixed point $z_0$, set
$a_m=a_{j_0,m}$. We now multiply \eqref{eq:U-zero} by $\rho^3$ and let $\rho\downarrow0$. By \eqref{eq:singular-expansions}, the contributions from $\Phi$ and its first-order derivatives vanish in this limit, while only the leading $\rho^{-3}$ singularities of the second-order derivatives remain. Recalling from  \eqref{eq:derivative-list} that $a_4,\ldots,a_8$ correspond, respectively, to
$\partial_{z_1}^2$, $\partial_{z_1}\partial_{z_2}$,
$\partial_{z_1}\partial_{z_3}$, $\partial_{z_2}^2$, and
$\partial_{z_2}\partial_{z_3}$, we obtain
\begin{equation}
\label{eq:quadratic-identity}
a_4(3w_1^2-1)+3a_5w_1w_2+3a_6w_1w_3
+a_7(3w_2^2-1)+3a_8w_2w_3=0,
\qquad w\in\mathbb S^2.
\end{equation}
Let $e_1=(1,0,0)$, $e_2=(0,1,0)$, and $e_3=(0,0,1)$. Taking $w=e_3$ and $w=e_1$ yields
\[
-a_4-a_7=0,\qquad 2a_4-a_7=0,
\]
so $a_4=a_7=0$.
Next, taking
$w=(e_1+e_2)/\sqrt2$, $(e_1+e_3)/\sqrt2$, and
$(e_2+e_3)/\sqrt2$ in \eqref{eq:quadratic-identity} gives
$a_5=a_6=a_8=0$.
Thus all five second-derivative coefficients at $z_0$ vanish.
Multiplying the remaining identity by $\rho^2$ and letting
$\rho\downarrow0$ gives
\[
a_1w_1+a_2w_2+a_3w_3=0,\qquad w\in\mathbb S^2,
\]
and hence $a_1=a_2=a_3=0$. Multiplying the remaining identity by $\rho$ and letting
$\rho\downarrow0$ gives $a_0=0$.
Since $z_0$ was arbitrary, all coefficients in
\eqref{eq:vanishing-combination} vanish.
The stated family is therefore linearly independent, and
\eqref{eq:Gram-quadratic-form} proves that $G$ is Hermitian positive
definite. In particular,
\begin{equation}
\label{eq:spectral-bounds}
0<\lambda_{\min}(G)I\leq G\leq\lambda_{\max}(G)I,
\end{equation}
where $\lambda_{\min}(G)$, $\lambda_{\max}(G)$ depends on $\bigl\{z_j\bigr\}_{j=1}^N$.

\medskip
\textbf{Step 3: Construction of the Herglotz kernel and its estimate.}
Set
\begin{equation}
\label{eq:density-construction}
\beta:=G^{-1}C,\qquad
g:=\mathcal L^*G^{-1}C
=\sum_{j=1}^N\sum_{m=0}^8
\beta_{j,m}\phi_{z_j,m}.
\end{equation}
Then
\[
\mathcal Lg=\mathcal L\mathcal L^*G^{-1}C=C,
\]
which proves \eqref{eq:general-interpolation}. Using \eqref{eq:Gram-quadratic-form}, \eqref{eq:spectral-bounds}, \eqref{eq:density-construction}, we obtain the estimates for kernel $g$,
\begin{equation*}
 \frac{||C||^2_2}{\lambda_{\max}(G)} \leqslant \|g\|_{L^2(\mathbb S^2)}^2
=\beta^*G\beta=C^*G^{-1}C\leqslant \frac{||C||^2_2}{\lambda_{\min}(G)}.
\end{equation*}
This proves \eqref{eq:norm-bounds}.

\medskip
\textbf{Step 4: Real-valuedness of the Herglotz field for real prescribed data $C$.}
Every function in \eqref{eq:patterns} satisfies
\begin{equation}
\label{eq:pattern-symmetry}
\phi_{z,m}(-d)=\overline{\phi_{z,m}(d)}.
\end{equation}
Changing variables $d\mapsto-d$ in the Gram integrals therefore gives
\[
\overline{G_{(j,m),(i,n)}}
=G_{(j,m),(i,n)}.
\]
Thus $G$ is real symmetric positive definite.
If $C\in\mathbb R^{9N}$, then $\beta=G^{-1}C\in\mathbb R^{9N}$.
By \eqref{eq:density-construction} and \eqref{eq:pattern-symmetry},
$g(-d)=\overline{g(d)}$.
It follows that, for every $x\in\mathbb R^3$,
\[
\overline{H_g(x)}
=\int_{\mathbb S^2}e^{-ix\cdot d}\overline{g(d)}\,dS(d)
=\int_{\mathbb S^2}e^{ix\cdot d}\overline{g(-d)}\,dS(d)
=H_g(x).
\]
Hence $H_g$ is real-valued. 
\end{proof}

\begin{remark}
The above construction has two useful consequences. First, the values of $H_g$ and its derivatives up to second order can be prescribed at finitely many points, subject to the compatibility condition imposed by the Helmholtz equation. Since
$$
(\Delta+1)H_g=0,
$$
we have, at every point $z$,
$$
\partial_{x_1}^2H_g(z)
+\partial_{x_2}^2H_g(z)
+\partial_{x_3}^2H_g(z)
=-H_g(z).
$$
Therefore, $H_g(z)$ and the three pure second derivatives
$$
\partial_{x_1}^2H_g(z),\qquad
\partial_{x_2}^2H_g(z),\qquad
\partial_{x_3}^2H_g(z)
$$
cannot be prescribed independently. In particular, once $H_g(z)$ and two of these three second derivatives are prescribed, the remaining one is determined by the above identity. Second, the Herglotz kernel $g$ obtained in the proof is explicitly computable. More precisely, for any compatible prescribed data $C$ and prescribed points, the proof gives
$$
g=\mathcal L^*(\mathcal L\mathcal L^*)^{-1}C,
$$
where the operator $\mathcal L$, and hence the matrix $\mathcal L\mathcal L^*$, is completely determined by the positions of the prescribed points. Thus $g$ can be computed directly from the data $C$ and the locations of the prescribed points by solving a finite-dimensional linear system.
\end{remark}

By using Lemma \ref{lem:herglotz-control}, we immediately construct smooth solutions $(p_h, v_h)$ to the linearized compressible Euler equations whose velocity and its first order value at a collection of finite points can be explicitly controlled.
\begin{theorem}[Customized acoustic profiles]
\label{thm:acoustic-profile}
Let $M>0$ and let the geometric configuration
$\{z_j\}_{j=1}^N$ be given as in Theorem~1.1.
Set
$$
\omega_0:=\sqrt{\frac{\gamma p_0}{\rho(p_0)}},
$$
where $p_0>0$ is the constant determined in
Section~\ref{sec:remainder}. Then the following statements hold.

\begin{enumerate}

\item
Let
$$
C=(c_j)_{j=1}^N\in\mathbb R^{9N},
$$
where
$$
c_j=(1,1,1,1,1,1,1,1,1),
$$
for $j=1,\ldots,N$, and define
$$
g:=\mathcal L^*G^{-1}C,
$$
where $\mathcal L^*$ and $G$ are given in
Lemma~\ref{lem:herglotz-control}.
Then $g(-d)=\overline{g(d)}$, and hence the corresponding
Herglotz wave $H_g$ is real-valued.

Define
\begin{equation}
\label{eq:smooth_pair}
\begin{cases}
\displaystyle
p_h(x,t)
:=
2\sqrt{\gamma\rho(p_0)p_0}\,M H_g(x)
\bigl(\sin(\omega_0t)-\cos(\omega_0t)\bigr),
\\[2ex]
\displaystyle
v_h(x,t)
:=
2M\nabla H_g(x)
\bigl(\cos(\omega_0t)+\sin(\omega_0t)\bigr).
\end{cases}
\end{equation}
Then $(p_h,v_h)$ is smooth and real-valued in
$\mathbb R^3\times[0,\infty)$.

\item
The pair $(p_h,v_h)$ satisfies the linear acoustic system
\begin{equation}
\label{eq:linear-acoustic}
\begin{cases}
\partial_t p_h+\gamma p_0\nabla\cdot v_h=0,
\\
\rho(p_0)\partial_t v_h+\nabla p_h=0,
\end{cases}
\qquad
\text{in }\mathbb R^3\times[0,\infty).
\end{equation}

\item
For every $j=1,\ldots,N$, the prescribed pointwise values are
\begin{equation}
\label{eq:acoustic-point-values}
\begin{aligned}
p_h(z_j,t)
&=
2\sqrt{\gamma\rho(p_0)p_0}\,M
\bigl(\sin(\omega_0t)-\cos(\omega_0t)\bigr),
\\
\nabla p_h(z_j,t)
&=
2\sqrt{\gamma\rho(p_0)p_0}\,M
\bigl(\sin(\omega_0t)-\cos(\omega_0t)\bigr)
(1,1,1)^\mathsf T,
\\
v_h(z_j,t)
&=
2M\bigl(\cos(\omega_0t)+\sin(\omega_0t)\bigr)
(1,1,1)^\mathsf T,
\\
\nabla_xv_h(z_j,t)
&=
2M\bigl(\cos(\omega_0t)+\sin(\omega_0t)\bigr)
\begin{pmatrix}
1&1&1\\
1&1&1\\
1&1&-3
\end{pmatrix}.
\end{aligned}
\end{equation}

\item
Let $
\Lambda_*=
\sqrt{\frac{9N}{\lambda_{\min}(G)}}
$
be given as in Lemma \ref{lem:herglotz-control}. Then, for every multi-index $\alpha$, one has
\begin{equation}
\label{eq:acoustic-global-estimates}
\sup_{(x,t)\in\mathbb R^3\times[0,\infty)}
|\partial_x^\alpha p_h(x,t)|
\leq
4\sqrt{2\pi\gamma\rho(p_0)p_0}\,
\Lambda_* M,
\end{equation}
and
\begin{equation}
\label{eq:acoustic-global-estimates-v}
\sup_{(x,t)\in\mathbb R^3\times[0,\infty)}
|\partial_x^\alpha v_h(x,t)|
\leq
4\sqrt{2\pi}\,
\Lambda_* M.
\end{equation}
\end{enumerate}
\end{theorem}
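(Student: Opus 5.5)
The plan is to verify the four items directly from Lemma~\ref{lem:herglotz-control}. The only ingredients are the Helmholtz identity $(\Delta+1)H_g=0$ and one algebraic relation among the constants $p_0$, $\rho(p_0)$ and $\omega_0$. No item should be a genuine obstacle. The only points needing care are the $(3,3)$ entry of $\nabla_x v_h$ and tracking the constants in the sup bounds.

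\emph{Item (1).} The data $C$ are real, so Step~4 of Lemma~\ref{lem:herglotz-control} gives $g(-d)=\overline{g(d)}$ and shows that $H_g$ is real-valued. Since $g\in L^2(\mathbb S^2)\subset L^1(\mathbb S^2)$ and $\partial_x^\alpha e^{ix\cdot d}=(id)^\alpha e^{ix\cdot d}$ is bounded uniformly in $x$, differentiation under the integral sign is justified to every order. Hence $H_g\in C^\infty(\mathbb R^3)$, and in fact it is real-analytic. The time factors are smooth and real, so $(p_h,v_h)$ is smooth and real-valued.

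\emph{Item (2).} Write $\kappa:=\sqrt{\gamma\rho(p_0)p_0}$ and record the identities
\[
\kappa\,\omega_0=\gamma p_0,\qquad \rho(p_0)\,\omega_0=\kappa .
\]
For the first equation, $\partial_t p_h=2\kappa M\omega_0 H_g(\cos+\sin)(\omega_0 t)$. On the other hand, $\gamma p_0\nabla\cdot v_h=2\gamma p_0 M\,\Delta H_g\,(\cos+\sin)(\omega_0t)=-2\gamma p_0 M H_g(\cos+\sin)(\omega_0t)$, using $\Delta H_g=-H_g$. The two terms cancel by $\kappa\omega_0=\gamma p_0$. For the second equation, $\rho(p_0)\partial_t v_h=2\rho(p_0)\omega_0M\nabla H_g(\cos-\sin)(\omega_0t)$ and $\nabla p_h=2\kappa M\nabla H_g(\sin-\cos)(\omega_0t)$. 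These cancel by $\rho(p_0)\omega_0=\kappa$.

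\emph{Item (3).} Evaluate at $z_j$ using \eqref{eq:special-interpolation}: $H_g(z_j)=1$, $\nabla H_g(z_j)=(1,1,1)^\mathsf T$, and all second derivatives other than $\partial_{x_3}^2$ equal $1$. Symmetry of the Hessian gives the off-diagonal entries. The missing entry comes from the Helmholtz constraint:
\[
\partial_{x_3}^2H_g(z_j)=-H_g(z_j)-\partial_{x_1}^2H_g(z_j)-\partial_{x_2}^2H_g(z_j)=-3.
\]
This produces the matrix with $-3$ in the $(3,3)$ slot.

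\emph{Item (4).} For every $\alpha$ and every $x$ we have $|d^\alpha|\le1$ on $\mathbb S^2$. Combining Cauchy--Schwarz with \eqref{eq:special-norm-bounds} gives
\[
|\partial_x^\alpha H_g(x)|\le\|g\|_{L^1(\mathbb S^2)}\le\sqrt{4\pi}\,\|g\|_{L^2(\mathbb S^2)}\le 2\sqrt\pi\,\Lambda_*.
\]
The same bound holds for $\partial_x^\alpha\partial_{x_k}H_g$, since $|d_kd^\alpha|\le1$. Finally, $|\sin s\pm\cos s|\le\sqrt2$, so
\[
|\partial_x^\alpha p_h|\le 2\kappa M\cdot\sqrt2\cdot2\sqrt\pi\,\Lambda_*=4\sqrt{2\pi\gamma\rho(p_0)p_0}\,\Lambda_*M,
\qquad
|\partial_x^\alpha v_h|\le4\sqrt{2\pi}\,\Lambda_*M .
\]
These bounds are uniform in $(x,t)$, which is exactly what \eqref{eq:acoustic-global-estimates} and \eqref{eq:acoustic-global-estimates-v} require.
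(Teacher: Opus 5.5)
Your proposal is correct and follows the paper's proof essentially step for step. Real-valuedness comes from Step~4 of Lemma~\ref{lem:herglotz-control}, the acoustic system from $\Delta H_g=-H_g$ together with $\omega_0^2=\gamma p_0/\rho(p_0)$ (your identities $\kappa\omega_0=\gamma p_0$ and $\rho(p_0)\omega_0=\kappa$), the $(3,3)$ entry $-3$ from the Helmholtz constraint, and the uniform bounds from Cauchy--Schwarz plus $|\sin s\pm\cos s|\le\sqrt2$.

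One cosmetic point: if $|\partial_x^\alpha v_h|$ is read as the Euclidean norm of a vector, bound $\bigl|\int_{\mathbb S^2}(id)^\alpha(id)e^{ix\cdot d}g\,dS\bigr|\le\|g\|_{L^1(\mathbb S^2)}$ directly, using $|d^\alpha|\le1$ and $|d|=1$. Bounding componentwise instead would introduce an extra factor $\sqrt3$.
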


\begin{proof}
By Lemma~\ref{lem:herglotz-control}, the kernel $ g=\mathcal L^*G^{-1}C $
is well defined and satisfies $
\|g\|_{L^2(\mathbb S^2)}
\leq \Lambda_*.$ Since $C$ is real, the same lemma gives $
g(-d)=\overline{g(d)},$ and therefore $H_g$ is real-valued. Moreover,
$$
(\Delta+1)H_g=0
\qquad\text{in }\mathbb R^3.
$$
Substituting \eqref{eq:smooth_pair} into
\eqref{eq:linear-acoustic} and using
$$
\omega_0^2=\frac{\gamma p_0}{\rho(p_0)}
\qquad\text{and}\qquad
\Delta H_g=-H_g
$$
gives the acoustic system directly. The pointwise identities in \eqref{eq:acoustic-point-values} follow from Lemma~\ref{lem:herglotz-control}. In particular,
$$
H_g(z_j)=1,
\qquad
\nabla H_g(z_j)=(1,1,1)^\mathsf T,
$$
and
$$
\partial_{x_1}^2H_g(z_j)
=
\partial_{x_2}^2H_g(z_j)=1.
$$
Since $(\Delta+1)H_g=0$,
$$
\partial_{x_3}^2H_g(z_j)
=
-H_g(z_j)
-\partial_{x_1}^2H_g(z_j)
-\partial_{x_2}^2H_g(z_j)
=-3.
$$
Finally, for every multi-index $\alpha$,
$$
\partial_x^\alpha H_g(x)
=
\int_{\mathbb S^2}
(id)^\alpha e^{ix\cdot d}g(d)\,dS(d).
$$
Hence, by the Cauchy--Schwarz inequality,
$$
|\partial_x^\alpha H_g(x)|
\leq
\sqrt{4\pi}\,
\|g\|_{L^2(\mathbb S^2)}
\leq
\sqrt{4\pi}\,\Lambda_*.
$$
The same argument applies to $\nabla\partial_x^\alpha H_g$.
Using
$$
|\sin(\omega_0t)-\cos(\omega_0t)|
\leq\sqrt2,
\qquad
|\sin(\omega_0t)+\cos(\omega_0t)|
\leq\sqrt2,
$$
gives \eqref{eq:acoustic-global-estimates} and
\eqref{eq:acoustic-global-estimates-v}. This completes the proof.
\end{proof}
  
  Now, we are in a position to set the smooth initial data for compressible Euler system \eqref{eq:euler_pressure_velocity} as follows
  \begin{equation}\label{eq:initial data for compressible euler}
  \begin{cases}
  p_{\text{in}}(x)=p_0+p_h(x,0)=p_0-2\sqrt{\gamma\rho(p_0)p_0}\,M H_g(x),\\
  v_{\text{in}}(x)=v_h(x,0)=2M\nabla H_g(x).
  \end{cases}
  \end{equation}
  
  %%%%%%%%%%%%%%%%%%%%%%%%%%%%%%%%%%%%%%%%%%%%%%%%
   %%%%%%%%%%%%%%%%%%%%%%%%%%%%%%%%%%%%%%%%%%%%%%%
 \section{Quantitative Estimates and Control of the Remainder}\label{sec:remainder}

Assume the classical solution \((p,v)\) to the compressible Euler system \eqref{eq:euler_pressure_velocity} have the following ansatz
\begin{equation}\label{eq: 1.2}
p(x,t)=p_0+p_h(x,t)+q(x,t), \qquad 
v(x,t)=v_h(x,t)+w(x,t),
\end{equation}
where \((p_h,v_h)\) is the smooth linear acoustic profile explicitly constructed in Theorem  \ref{thm:acoustic-profile}. The initial data for $(p,v)$ to the full system \eqref{eq:euler_pressure_velocity} are chosen as in \eqref{eq:initial data for compressible euler}, which implies
\[
q(x,0)=0,\qquad w(x,0)=0, \quad  x \in \mathbb{R}^3.
\]
Denote
\begin{equation*}
U(x,t) := \begin{pmatrix} q(x,t) \\ w(x,t) \end{pmatrix} \in \mathbb{R}^{1+3}.
\end{equation*}
Substituting the ansatz \eqref{eq: 1.2} into \eqref{eq:euler_pressure_velocity} and using the linearized relations satisfied by \((p_h,v_h)\) in \eqref{eq:linear-acoustic}, a direct calculation yields the following initial value problem for the remainder vector \(U\):
\begin{equation}\label{eq remainder term}
\begin{cases}
A_0(U)\,\partial_t U + \displaystyle\sum_{j=1}^{3} A_j(U)\,\partial_{x_j} U = G(U), & x \in \mathbb{R}^3, \ t \in [0,T],\\[2mm]
U(x,0) = 0, & x \in \mathbb{R}^3,
\end{cases}
\end{equation}
where the coefficient matrices are given by
\begin{equation}\label{eq:A_0(U)}
A_0(U) =
\begin{pmatrix}
1 & 0\\
0 & \rho\bigl(p_0+p_h+q\bigr)\,I_d
\end{pmatrix},
\end{equation}
and for \(j=1,\dots,3\),
\begin{equation}\label{eq:Aj-q}
A_j(U) =
\begin{pmatrix}
(v_h+w)_j & \gamma \bigl(p_0+p_h+q\bigr)\,e_j^\top\\[1ex]
e_j & \rho\bigl(p_0+p_h+q\bigr)\,(v_h+w)_j\,I_d
\end{pmatrix}.
\end{equation}
Here \(e_j\) is the standard unit vector in \(\mathbb{R}^3\).  The source vector is
\begin{equation}\label{eq:G-q}
G(U) =
\begin{pmatrix}
G_1(q,w)\\[1ex]
G_2(q,w)
\end{pmatrix},
\end{equation}
with the components
\begin{align}
G_1(q,w) &= -\,\Bigl( (v_h+w)\cdot\nabla p_h 
+ \gamma\,(p_h+q)\,\nabla\!\cdot\! v_h \Bigr), \label{eq:G1-q}\\[1ex]
G_2(q,w) &= -\,\Bigl( \rho\bigl(p_0+p_h+q\bigr)\,(v_h+w)\cdot\nabla v_h 
+ \mathfrak{m}(q)\,\partial_t v_h\Bigr), \label{eq:G2-q}
\end{align}
and the difference term \(\mathfrak{m}\) is
\begin{equation}\label{eq:m-eps-q}
\mathfrak{m}(q) := \rho\bigl(p_0+p_h+q\bigr)-\rho(p_0)
= \int_{0}^{1} \rho'\!\bigl(p_0+\tau(p_h+q)\bigr)\,(p_h+q)\,d\tau .
\end{equation}
To symmetrize the system we introduce the matrix
\begin{equation}\label{eq:sym-q}
S(U) =
\begin{pmatrix}
\bigl(\gamma(p_0+p_h+q)\bigr)^{-1} & 0\\
0 & I_3
\end{pmatrix}.
\end{equation}
Multiplying \eqref{eq remainder term} by \(S(U)\) from the left gives the symmetric hyperbolic system
\begin{equation}\label{eq:sym-system-q}
\widetilde A_0(U)\,\partial_t U + \sum_{j=1}^{d} \widetilde A_j(U)\,\partial_{x_j}U = \widetilde{G}(U),
\end{equation}
with the symmetric positive definite matrix
\begin{equation}\label{eq:A0tilde-q}
\widetilde A_0(U) =
\begin{pmatrix}
\bigl(\gamma(p_0+p_h+q)\bigr)^{-1} & 0\\
0 & \rho\bigl(p_0+p_h+q\bigr)\,I_3
\end{pmatrix},
\end{equation}
the symmetric spatial matrices
\begin{equation}\label{eq:Ajtilde-q}
\widetilde A_j(U) =
\begin{pmatrix}
\dfrac{(v_h+w)_j}{\gamma(p_0+p_h+q)} & e_j^\top\\[2ex]
e_j & \rho\bigl(p_0+p_h+q\bigr)(v_h+w)_j\,I_3
\end{pmatrix},
\end{equation}
$j=1,\dots,3,$ and the modified source term
\begin{equation}\label{eq:Gtilde-q}
\widetilde{G}(U) =
\begin{pmatrix}
\bigl(\gamma(p_0+p_h+q)\bigr)^{-1}G_1(q,w)\\[1ex]
G_2(q,w)
\end{pmatrix}.
\end{equation}

To establish the well‑posedness of \eqref{eq remainder term} we rely on two auxiliary lemmas.
\begin{lemma}[Local stability estimate for linear symmetric hyperbolic systems]
\label{lemma:local-stability}
Let $T>0$ and let
\[
\Omega(t):=B_{1+\mathfrak{R}(T-t)}(y),
\qquad 0\le t\le T,
\]
for some fixed $y\in\mathbb{R}^d$ and $\mathfrak{R}>0$. Define
\[
Q_T:=\{(x,t)\in\mathbb{R}^d\times[0,T]:x\in\Omega(t)\}.
\]
Suppose that $U$ is a smooth solution of the linear system
\begin{equation}
\label{eq:linear-hyperbolic}
\widetilde A_0(V)\partial_t U
+
\sum_{j=1}^d
\widetilde A_j(V)\partial_{x_j}U
=
F
\qquad\text{in }Q_T,
\end{equation}
with initial condition
\begin{equation}
\label{eq:linear-hyperbolic-initial}
U(x,0)=U_0(x),
\qquad x\in\Omega(0).
\end{equation}
Assume that $\widetilde A_0(V)$ and $\widetilde A_j(V)$ are symmetric,
\[
\widetilde A_0(V)
=
\widetilde A_0(V)^\top,
\qquad
\widetilde A_j(V)
=
\widetilde A_j(V)^\top,
\quad j=1,\ldots,d,
\]
and that $\widetilde A_0(V)$ is uniformly positive definite in the sense that
\begin{equation}
\label{eq:A0-coercivity}
C_I I_{1+d}
\le
\widetilde A_0(V)
\le
C_I^{-1}I_{1+d}
\qquad\text{in }Q_T,
\end{equation}
for some constant $C_I\in(0,1]$. Set
\[
\operatorname{div}A
:=
\partial_t\widetilde A_0(V)
+
\sum_{j=1}^d
\partial_{x_j}\widetilde A_j(V),
\]
and assume that
\[
\|\operatorname{div}A\|_{L^\infty(Q_T)}
<\infty,
\]
where the norm of a matrix is understood as its operator norm. Furthermore, define
\[
\Lambda_A
:=
\sup_{\substack{(x,t)\in Q_T\\ n\in\mathbb{S}^{d-1}}}
\left\|
\sum_{j=1}^d n_j\widetilde A_j(V(x,t))
\right\|_{\mathrm{op}},
\]
and choose $\mathfrak{R}$ such that
\begin{equation}
\label{eq:R-choice}
\mathfrak{R}C_I\ge \Lambda_A.
\end{equation}
Then, for every $t\in[0,T]$,
\begin{equation}
\label{eq:local-stability}
\begin{aligned}
\|U(t)\|_{L^2(\Omega(t))}
&\leq
C_I^{-1}
\exp\left(
\frac{1}{2}C_I^{-1}
\|\operatorname{div}A\|_{L^\infty(Q_T)}t
\right)\\
&\times\left(
\|U_0\|_{L^2(\Omega(0))}
+
\int_0^t
\|F(s)\|_{L^2(\Omega(s))}\,ds
\right).
\end{aligned}
\end{equation}
\end{lemma}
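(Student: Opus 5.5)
We prove the estimate by the standard energy method on the shrinking cone $Q_T$: take the inner product of the equation with $U$, integrate over $\Omega(t)$, and use the divergence theorem in space–time. The boundary flux through the lateral surface has a sign fixed by \eqref{eq:R-choice}. We then close with a Grönwall argument on $E(t)^{1/2}$ and convert back to the $L^2$ norm using \eqref{eq:A0-coercivity}.

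\textbf{Energy identity.} Define the weighted energy
\[
E(t):=\int_{\Omega(t)}U^\top\widetilde A_0(V)U\,dx .
\]
Using the symmetry of $\widetilde A_0(V)$ and $\widetilde A_j(V)$, we have the pointwise identity
\[
\partial_t\bigl(U^\top\widetilde A_0U\bigr)+\sum_j\partial_{x_j}\bigl(U^\top\widetilde A_jU\bigr)
=U^\top(\operatorname{div}A)U+2U^\top F .
\]
Differentiate $E(t)$ by the Reynolds transport theorem. The ball $\Omega(t)$ has radius $1+\mathfrak R(T-t)$, so its boundary moves inward with normal speed $-\mathfrak R$. This gives
\[
E'(t)=\int_{\Omega(t)}\bigl(U^\top(\operatorname{div}A)U+2U^\top F\bigr)\,dx
-\int_{\partial\Omega(t)}U^\top\Bigl(\mathfrak R\,\widetilde A_0+\sum_j n_j\widetilde A_j\Bigr)U\,dS ,
\]
where $n$ is the outward unit normal. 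The minus sign on the spatial flux comes from the divergence theorem applied to $-\sum_j\partial_{x_j}(U^\top\widetilde A_jU)$.

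\textbf{Sign of the boundary term.} This is the only place the geometry enters, and it is where \eqref{eq:R-choice} is used. By \eqref{eq:A0-coercivity} and the definition of $\Lambda_A$,
\[
\mathfrak R\,\widetilde A_0+\sum_j n_j\widetilde A_j\ \ge\ (\mathfrak R C_I-\Lambda_A)\,I\ \ge\ 0 .
\]
Hence the boundary integral is nonnegative and can be dropped. The one technical point is that $\Lambda_A$ must be a supremum over the boundary points of $Q_T$ as well, which holds since it is taken over all of $Q_T$. Without the sign condition one would need control of the trace of $U$, which is unavailable.

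\textbf{Grönwall and conclusion.} Dropping the boundary term, bounding $\operatorname{div}A$ in operator norm, and applying Cauchy–Schwarz gives
\[
E'(t)\le \|\operatorname{div}A\|_{L^\infty}\,\|U\|_{L^2(\Omega(t))}^2+2\|U\|_{L^2(\Omega(t))}\|F\|_{L^2(\Omega(t))}.
\]
Coercivity gives $\|U\|_{L^2(\Omega(t))}^2\le C_I^{-1}E$, so
\[
E'\le C_I^{-1}\|\operatorname{div}A\|_{L^\infty}\,E+2C_I^{-1/2}E^{1/2}\|F\|_{L^2(\Omega(t))}.
\]
Set $y=E^{1/2}$. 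Where $y>0$ this reads
\[
y'\le \tfrac12 C_I^{-1}\|\operatorname{div}A\|_{L^\infty}\,y+C_I^{-1/2}\|F\|_{L^2(\Omega(t))}.
\]
To avoid dividing by $y$ where it vanishes, work with $(E+\epsilon)^{1/2}$ and let $\epsilon\to0$. Grönwall then yields
\[
y(t)\le e^{\frac12 C_I^{-1}\|\operatorname{div}A\|_{L^\infty}\,t}\Bigl(y(0)+C_I^{-1/2}\int_0^t\|F(s)\|_{L^2(\Omega(s))}\,ds\Bigr).
\]
Finally use coercivity in both directions: $\|U(t)\|_{L^2(\Omega(t))}\le C_I^{-1/2}y(t)$ and $y(0)\le C_I^{-1/2}\|U_0\|_{L^2(\Omega(0))}$. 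Together with $C_I\le1$, this gives
\[
C_I^{-1/2}\bigl(C_I^{-1/2}\|U_0\|_{L^2(\Omega(0))}+C_I^{-1/2}\textstyle\int_0^t\|F(s)\|_{L^2(\Omega(s))}\,ds\bigr),
\]
which is the overall prefactor $C_I^{-1}$ claimed in \eqref{eq:local-stability}.
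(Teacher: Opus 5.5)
Your proof is correct and follows the same route as the paper. Both use the localized energy $E(t)=\int_{\Omega(t)}U^\top\widetilde A_0U\,dx$, the transport term $-\mathfrak{R}\int_{\partial\Omega(t)}(\widetilde A_0U,U)\,dS$ from the shrinking ball, integration by parts of the $\widetilde A_j$ terms, and nonnegativity of the resulting boundary flux from $\mathfrak{R}C_I\ge\Lambda_A$. The paper stops at $E'(t)\le(\operatorname{div}A\,U,U)+2(F,U)$ and cites Sogge for the rest, whereas you carry out the Gr\"onwall step on $(E+\epsilon)^{1/2}$ explicitly and recover exactly the prefactor $C_I^{-1}$; your appeal to $C_I\le 1$ there is not actually needed.
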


\begin{proof}
For simplicity, write
\[
A_0:=\widetilde A_0(V),
\qquad
A_j:=\widetilde A_j(V),
\quad j=1,\ldots,d.
\]
Introduce the localized energy
\begin{equation}
\label{eq:localized-energy}
E(t)
:=
(A_0U,U)_{L^2(\Omega(t))}
=
\int_{\Omega(t)}(A_0U,U)\,dx.
\end{equation}
By the coercivity assumption \eqref{eq:A0-coercivity},
\begin{equation}
\label{eq:energy-equivalence}
C_I
\|U(t)\|_{L^2(\Omega(t))}^2
\le
E(t)
\le
C_I^{-1}
\|U(t)\|_{L^2(\Omega(t))}^2.
\end{equation}
Since
\[
\Omega(t)=B_{r(t)}(y),
\qquad
r(t)=1+\mathfrak{R}(T-t),
\]
we have
\[
r'(t)=-\mathfrak{R}.
\]
Hence the outward normal velocity of $\partial\Omega(t)$ is
$-\mathfrak{R}$. Direct calculation yields,
\begin{equation}
\label{eq:energy-derivative-1}
\begin{aligned}
\frac{d}{dt}E(t)
={}&
-\mathfrak{R}
\int_{\partial\Omega(t)}
(A_0U,U)\,dS_x
\\
&+
\int_{\Omega(t)}
(\partial_t A_0\,U,U)\,dx
+
2\int_{\Omega(t)}
(A_0\partial_tU,U)\,dx.
\end{aligned}
\end{equation}
Using the equation \eqref{eq:linear-hyperbolic},
\[
A_0\partial_tU
=
F-\sum_{j=1}^d A_j\partial_{x_j}U,
\]
we obtain
\begin{equation}
\label{eq:energy-derivative-2}
\begin{aligned}
\frac{d}{dt}E(t)
={}&
-\mathfrak{R}
\int_{\partial\Omega(t)}
(A_0U,U)\,dS_x
+
(\partial_tA_0\,U,U)_{L^2(\Omega(t))}
\\
&+
2(F,U)_{L^2(\Omega(t))}
-
2\sum_{j=1}^d
(A_j\partial_{x_j}U,U)_{L^2(\Omega(t))}.
\end{aligned}
\end{equation}
Since each $A_j$ is symmetric,
\[
\partial_{x_j}(U^{\top}A_jU)
=
2U^\top A_j\partial_{x_j}U
+
U^\top(\partial_{x_j}A_j)U.
\]
Therefore, by integration by parts,
\begin{equation}
\label{eq:integration-by-parts-Aj}
\begin{aligned}
2(A_j\partial_{x_j}U,U)_{L^2(\Omega(t))}
={}&
\int_{\partial\Omega(t)}
(A_jn_jU,U)\,dS_x
\\
&-
(\partial_{x_j}A_j\,U,U)_{L^2(\Omega(t))},
\end{aligned}
\end{equation}
where
$n=(n_1,\ldots,n_d)$ is the outward unit normal vector on $\partial\Omega(t)$. Substituting \eqref{eq:integration-by-parts-Aj} into
\eqref{eq:energy-derivative-2}, we arrive at the localized Friedrichs
energy identity
\begin{equation}
\label{eq:friedrichs-local-energy}
\begin{aligned}
\frac{d}{dt}E(t)
&+
\int_{\partial\Omega(t)}
\left(
\mathfrak{R}(A_0U,U)
+
\left(
\sum_{j=1}^d n_jA_j
\right)U\cdot U
\right)
\,dS_x
\\
&=
\left(
\left[
\partial_tA_0
+
\sum_{j=1}^d\partial_{x_j}A_j
\right]U,U
\right)_{L^2(\Omega(t))}
+
2(F,U)_{L^2(\Omega(t))}
\\
&=
(\operatorname{div}A\,U,U)_{L^2(\Omega(t))}
+
2(F,U)_{L^2(\Omega(t))}.
\end{aligned}
\end{equation}
We next estimate the boundary term. By
\eqref{eq:A0-coercivity},
\[
(A_0U,U)\ge C_I|U|^2.
\]
Moreover, by the definition of $\Lambda_A$,
\[
\left|
\left(
\sum_{j=1}^d n_jA_j
\right)U\cdot U
\right|
\le
\Lambda_A|U|^2.
\]
It follows that
\[
\begin{aligned}
\mathfrak{R}(A_0U,U)
+
\left(
\sum_{j=1}^d n_jA_j
\right)U\cdot U
\ge
(\mathfrak{R}C_I-\Lambda_A)|U|^2.
\end{aligned}
\]
Thus, by \eqref{eq:R-choice},
\begin{equation}
\label{eq:nonnegative-boundary}
\int_{\partial\Omega(t)}
\left(
\mathfrak{R}(A_0U,U)
+
\left(
\sum_{j=1}^d n_jA_j
\right)U\cdot U
\right)
\,dS_x
\ge0.
\end{equation}
Dropping this nonnegative boundary term from
\eqref{eq:friedrichs-local-energy}, we obtain
\begin{equation}
\label{eq:E-prime-first}
E'(t)
\le
(\operatorname{div}A\,U,U)_{L^2(\Omega(t))}
+
2(F,U)_{L^2(\Omega(t))}.
\end{equation}
Then the remainder of the proof proceeds in the same way as in the global stability estimate; for details, we refer to \cite{Sogge1995}.
\end{proof}

\begin{lemma}[Moser-type calculus inequalities]
\label{lem:moser-calculus}
The following assertions hold:
\begin{enumerate}
    \item[(A)] For any $f, g \in H^s \cap L^\infty$ and multi-index $\alpha$ with $|\alpha| \le s$,
    \[
    \|D^\alpha (fg)\|_0 \le C_s \left( \|f\|_{L^\infty} \|D^s g\|_0 + \|g\|_{L^\infty} \|D^s f\|_0 \right).
    \]

    \item[(B)] For any $f \in H^s$ with $Df \in L^\infty$, $g \in H^{s-1} \cap L^\infty$, and $|\alpha| \le s$,
    \[
    \|D^\alpha (fg) - f D^\alpha g\|_0 \le C_s \left( \|Df\|_{L^\infty} \|D^{s-1} g\|_0 + \|g\|_{L^\infty} \|D^s f\|_0 \right).
    \]

    \item[(C)] Let $g(u)$ be a smooth vector-valued function defined on $G$, and let $u(x)$ be a continuous function such that $u(x) \in G_1 \subset\subset G$ and $u \in L^\infty \cap H^s$. Then, for any $s \ge 1$,
    \[
    \|D^s g(u)\|_0 \le C_s \left| \frac{\partial g}{\partial u} \right|_{s-1, \overline{G}_1} \|u\|_{L^\infty}^{s-1} \|D^s u\|_0.
    \]
\end{enumerate}
Here, $|\cdot|_{r, \overline{G}_1}$ denotes the $C^r$ norm on the set $\overline{G}_1$.
\end{lemma}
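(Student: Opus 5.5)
These are the classical Moser calculus inequalities (as in Majda's monograph), so I plan to prove them by Gagliardo--Nirenberg interpolation. Throughout, $\|\cdot\|_0$ denotes the $L^2(\mathbb R^d)$ norm and $D^s$ the collection of all derivatives of order exactly $s$. The key tool is the interpolation inequality: for $0\le i\le m$ and $f\in L^\infty\cap H^m$,
\[
\|D^i f\|_{L^{2m/i}}\le C\,\|f\|_{L^\infty}^{1-i/m}\|D^m f\|_0^{i/m}.
\]
I take this as standard and would cite it rather than reprove it. By density, it suffices to argue for Schwartz functions and then pass to the limit.

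\textbf{Part (A).} Expand by Leibniz:
\[
D^\alpha(fg)=\sum_{\beta\le\alpha}\binom{\alpha}{\beta}D^\beta f\,D^{\alpha-\beta}g .
\]
Set $m=|\alpha|$ and $i=|\beta|$. Hölder with exponents $2m/i$ and $2m/(m-i)$, followed by the interpolation inequality, bounds each term by
\[
C\,\bigl(\|f\|_{L^\infty}\|D^m g\|_0\bigr)^{1-i/m}\bigl(\|g\|_{L^\infty}\|D^m f\|_0\bigr)^{i/m}.
\]
Young's inequality $a^\theta b^{1-\theta}\le a+b$ then gives the bound with $m$ in place of $s$. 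When $|\alpha|<s$, the statement as written (with $D^s$ on the right) is either read with $s=|\alpha|$, or obtained by applying the $m=|\alpha|$ estimate. I would note explicitly that the intended reading is $s=|\alpha|$, which is how the lemma is used in energy estimates.

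\textbf{Part (B).} The commutator is
\[
D^\alpha(fg)-fD^\alpha g=\sum_{0<\beta\le\alpha}\binom{\alpha}{\beta}D^\beta f\,D^{\alpha-\beta}g .
\]
Write $D^\beta f=D^{\beta-e}(Df)$ for some unit multi-index $e\le\beta$. Each term is then a product of a derivative of order $i-1$ of $Df$ with a derivative of order $m-i$ of $g$, and the orders sum to $m-1$. Applying the argument of (A) to the pair $(Df,g)$ with total order $m-1$ yields
\[
C\bigl(\|Df\|_{L^\infty}\|D^{m-1}g\|_0+\|g\|_{L^\infty}\|D^{m}f\|_0\bigr).
\]

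\textbf{Part (C).} Use the chain rule (Faà di Bruno): $D^s g(u)$ is a sum of terms
\[
(\partial^k g)(u)\,D^{\beta_1}u\cdots D^{\beta_k}u,\qquad 1\le k\le s,\quad \textstyle\sum|\beta_l|=s,\ |\beta_l|\ge1 .
\]
The factor $(\partial^k g)(u)$ is bounded by $|\partial g/\partial u|_{k-1,\overline G_1}$, since $u$ takes values in $G_1$. For the product, apply Hölder with exponents $2s/|\beta_l|$ and then the interpolation inequality to each factor, giving
\[
\prod_l\|u\|_{L^\infty}^{1-|\beta_l|/s}\|D^s u\|_0^{|\beta_l|/s}=\|u\|_{L^\infty}^{k-1}\|D^s u\|_0 .
\]
The factor $\|u\|_{L^\infty}^{k-1}$ is controlled by $\|u\|_{L^\infty}^{s-1}$ as long as $\|u\|_{L^\infty}\ge1$. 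In general the sharp form is $\max_k\|u\|_{L^\infty}^{k-1}\le(1+\|u\|_{L^\infty})^{s-1}$, and I would state the harmless modification of the constant that this requires.

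\textbf{Main obstacle.} The only genuinely nontrivial ingredient is the Gagliardo--Nirenberg interpolation inequality. Everything else is bookkeeping of Leibniz and Faà di Bruno exponents so that Hölder's exponents match the interpolation exponents. The two minor points to flag are the convention $|\alpha|=s$ in (A) and (B), and the $\|u\|_{L^\infty}\ge1$ normalization in (C).
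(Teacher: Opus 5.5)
The paper gives no proof of this lemma. It is stated as the classical Moser calculus inequalities, in the form found in Majda's monograph, so there is no in-paper argument to compare against. Your argument is the standard proof from that literature and is correct: Leibniz or Fa\`a di Bruno expansion, H\"older with exponents $2m/|\beta|$, and the Gagliardo--Nirenberg bound $\|D^i f\|_{L^{2m/i}}\le C\|f\|_{L^\infty}^{1-i/m}\|D^m f\|_0^{i/m}$. The exponent bookkeeping is right in all three parts, including the reduction of (B) to the pair $(Df,g)$ at total order $m-1$, where the case $m=1$ is trivial.

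Your two caveats are genuine corrections, not conventions: the lemma as printed is false without them.
\begin{itemize}
\item In (A) with $|\alpha|<s$, take $f=g=\phi(\cdot/R)$ and $\alpha=0$. The left side is of order $R^{d/2}$ and the right side of order $R^{d/2-s}$. The same scaling breaks (B) for $1\le|\alpha|<s$. So for $|\alpha|<s$ you should state the estimate with $D^{|\alpha|}$ on the right, or with full $H^s$ norms. Saying the printed inequality is ``obtained'' from the $m=|\alpha|$ case is not accurate.
\item In (C), take $g(u)=u$, $s=2$, $u=\varepsilon\phi$. Then $|\partial g/\partial u|_{1,\overline G_1}=1$, and the printed inequality reads $\varepsilon\|D^2\phi\|_0\le C_2\varepsilon^2\|\phi\|_{L^\infty}\|D^2\phi\|_0$, which fails for small $\varepsilon$. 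So your factor $(1+\|u\|_{L^\infty})^{s-1}$ is needed.
\end{itemize}

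One further point is worth recording. Your argument is on $\mathbb{R}^d$, whereas the paper applies (A)--(B) on the balls $\Omega(t)$. There the homogeneous form fails outright: $f=g=x_1$ with $s=2$ gives $D^2f=D^2g=0$ but $\partial_1^2(fg)=2$. What the paper actually uses is the inhomogeneous version with full Sobolev norms on $\Omega(t)$. That version follows from your whole-space estimate via an extension operator, whose constants are uniform for radii in $[1,1+\mathfrak R T]$.
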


\begin{theorem}[Existence, uniqueness, and control of the remainder term]
\label{thm:remainder}
There exist a time interval $T= CM^{-\frac{3\gamma+1}{\gamma-1}} $, here the constant $C$ depends on $\gamma>1, A$ and the geometric configuration $\{z_j\}_{j=1}^N$ but not on $M$.  Then there exists a unique classical solution
$$
U=(q,w)^{{\mathsf T}}\in C^{1}([0,T]\times\mathbb{R}^{3})
$$
to \eqref{eq remainder term} such that
\begin{equation}\label{estimate for q w}
\max_{0 \leqslant t\leqslant T} \|(q(t),w(t))\|_{C^{1}(\mathbb{R}^{d})}\le M.
\end{equation}

\end{theorem}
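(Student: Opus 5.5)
The plan is to build $U=(q,w)$ by a standard Picard iteration for the symmetrized system \eqref{eq:sym-system-q}, with all estimates done in uniformly localized Sobolev norms, and to close a bootstrap that fixes $p_0$ and $T$ as powers of $M$. A global $H^s(\mathbb R^3)$ theory is not available here: $H_g$, and hence $(p_h,v_h)$, does not decay and is not in $L^2$. Instead I will work with
\[
\mathcal E_s(t):=\sup_{y\in\mathbb R^3}\|U(t)\|_{H^s(\Omega_y(t))},\qquad s\ge 3,
\]
where $\Omega_y(t)=B_{1+\mathfrak R(T-t)}(y)$ are the shrinking balls of Lemma~\ref{lemma:local-stability}. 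Since $s\ge3$, Sobolev embedding on balls of radius $\ge1$ gives $\|U(t)\|_{C^1(\mathbb R^3)}\lesssim \mathcal E_s(t)$ with a constant independent of $y$. So \eqref{estimate for q w} will follow from $\sup_{t\le T}\mathcal E_s(t)\le cM$ for a small absolute constant $c$.

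\textbf{Choice of $p_0$ and scaling.} The coefficient matrices must stay uniformly positive definite, so I need $p_0+p_h+q\ge p_0/2$. Since $|p_h|\lesssim \sqrt{\rho(p_0)p_0}\,\Lambda_*M\sim p_0^{(\gamma+1)/(2\gamma)}M$ by \eqref{eq:acoustic-global-estimates}, this forces
\[
p_0^{(\gamma-1)/(2\gamma)}\simeq M,\qquad\text{i.e.}\qquad p_0:=\kappa M^{2\gamma/(\gamma-1)}
\]
with $\kappa$ large. This is what I take \eqref{eq:setting of p0} to be. With this choice:
\begin{itemize}
\item $\rho(p_0)\simeq M^{2/(\gamma-1)}$ and $\omega_0\simeq M$;
\item $|\partial^\alpha p_h|\lesssim M^{2\gamma/(\gamma-1)}$, which already gives the scale in \eqref{eq:press-grad-amp};
\item $C_I\simeq p_0^{-1}$, so $C_I^{-1}\simeq M^{2\gamma/(\gamma-1)}$;
\item $\Lambda_A\lesssim 1+\rho(p_0)M$, which fixes the cone slope $\mathfrak R$ in \eqref{eq:R-choice}.
\end{itemize}

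\textbf{Energy estimates.} For the iterate $U^{k+1}$ solving the linear system with coefficients frozen at $V=U^k$, I apply $\partial_x^\alpha$ for $|\alpha|\le s$. The commutators are handled with Lemma~\ref{lem:moser-calculus}(B), and the composite functions $\rho(p_0+p_h+q)$ and $(\gamma(p_0+p_h+q))^{-1}$ with Lemma~\ref{lem:moser-calculus}(C), all on the balls $\Omega_y(t)$. I then apply Lemma~\ref{lemma:local-stability} to each $\partial^\alpha U^{k+1}$, take the supremum over $y$, and use $U(0)=0$. This gives
\[
\mathcal E_s(t)\le C_I^{-1}e^{\frac12C_I^{-1}\|\operatorname{div}A\|_\infty t}\int_0^t\bigl(\|\widetilde G\|_{H^s_{\rm ul}}+\text{commutators}\bigr)\,ds .
\]
The time derivatives in $\operatorname{div}A$ bring a factor $\omega_0\simeq M$. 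Under the bootstrap hypothesis $\mathcal E_s\le cM$, the dominant source is $G_2\sim\rho(p_0)\,v_h\cdot\nabla v_h\simeq M^{2/(\gamma-1)}M^2$, and $G_1/(\gamma p)$ is of comparable or smaller size. Counting powers, I expect the requirement to reduce to
\[
C_I^{-1}\,T\,M^{(\gamma+1)/(\gamma-1)}\lesssim c,
\]
which, after tracking exponents, yields
\[
T=C_0M^{-2\gamma/(\gamma-1)-(\gamma+1)/(\gamma-1)}=C_0M^{-(3\gamma+1)/(\gamma-1)}.
\]
With $T$ this small, the exponential factor is $O(1)$, since $C_I^{-1}\|\operatorname{div}A\|\,T\lesssim 1$. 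The same choice of $T$ keeps $|q|\le cM$ much smaller than $p_0$, so the coercivity \eqref{eq:A0-coercivity} persists along the iteration.

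\textbf{Conclusion and main obstacle.} Uniform boundedness of the iterates in the high norm, together with contraction in the low norm $\sup_y\|\cdot\|_{L^2(\Omega_y)}$ (again from Lemma~\ref{lemma:local-stability} applied to differences), gives convergence to a solution that is $C^1$ on $[0,T]\times\mathbb R^3$. Uniqueness also follows from Lemma~\ref{lemma:local-stability}. The main obstacle I expect is the exponent bookkeeping. The weighted energy mixes $q$, weighted by $p_0^{-1}$, with $w$, weighted by $\rho(p_0)$, and the lemma only gives the crude factor $C_I^{-1}$ in front. I would first check whether the estimate closes with this crude factor and verify that it produces exactly the exponent $(3\gamma+1)/(\gamma-1)$. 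If it does not, I would rescale $q$ by $\sqrt{\rho(p_0)/p_0}$ to balance the two components of the energy before applying the lemma.
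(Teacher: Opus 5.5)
Your plan is essentially the paper's proof: a Picard iteration on the symmetrized system, Lemma~\ref{lemma:local-stability} on the shrinking balls with $\mathfrak R\simeq\Lambda_A C_I^{-1}\simeq M^{\frac{3\gamma+1}{\gamma-1}}$, uniformly local $H^3$ bounds closed with the crude factor $C_I^{-1}\simeq M^{\frac{2\gamma}{\gamma-1}}$, and an $L^2_{\mathrm{ul}}$ contraction, with the same exponent count you sketch. No rescaling of $q$ is needed, since the source contribution $C_I^{-1}T\|\widetilde G\|\simeq M^{\frac{4\gamma}{\gamma-1}}T\lesssim M$ and the Gronwall and exponential factors all reduce to $M^{\frac{3\gamma+1}{\gamma-1}}T\lesssim 1$.

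The one thing you need to add is a bound $\|\partial_tU^{(k)}\|_{L^\infty}\lesssim M^{\frac{3\gamma-1}{\gamma-1}}$ carried in the induction hypothesis and recovered from the linearized equation for $U^{(k)}$, as the paper does. This is needed because $\partial_tq^{(k)}\sim\gamma p\,\nabla\cdot w^{(k)}$ is not controlled by $\omega_0$ times your spatial norms, yet it enters both $\operatorname{div}A$ and the contraction term $\bigl(\widetilde A_0(U^{(k)})-\widetilde A_0(U^{(k-1)})\bigr)\partial_tU^{(k)}$.
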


\begin{proof}
We proceed by mathematical induction and iterative application of Lemma \ref{lemma:local-stability}.
	
\medskip
\noindent\textbf{Step 1: Construction of approximate sequences \(\{U^{(k)}\}_{k\ge0}\).}

Let
\[
  U^{(0)}(x,t)
  =\bigl(q^{(0)}(x,t),w^{(0)}(x,t)\bigr)^\top\equiv0.
\]
Assume that
\[
  U^{(k)}=\bigl(q^{(k)},w^{(k)}\bigr)^\top
\]
has been constructed and satisfies
\begin{equation}\label{eq:estimate for $U^k$}
  \max_{0\le t\le T}\sup_{y\in \mathbb{R}^3}
  \norm{U^{(k)}(t)}_{H^3(B_1(y))}
  \lesssim M,
\end{equation}
and
\begin{equation}
  \max_{0\le t\le T}
  \norm{\bigl(\partial_tq^{(k)},\partial_tw^{(k)}\bigr)}
  _{L^\infty(\mathbb R^3)}
  \lesssim M^{\frac{3\gamma-1}{\gamma-1}}.
\end{equation}
Here $T$ is determined later. By narrowing the constant in \eqref{eq:estimate for $U^k$} if necessary and the Sobolev embedding, we have
\begin{equation}\label{eq: L infinity norm for q^k, w^k}
  \max_{0\le t\le T}
  \norm{\bigl(
    q^{(k)}(t),\nabla q^{(k)}(t),
    w^{(k)}(t),\nabla w^{(k)}(t)
  \bigr)}_{L^\infty(\mathbb R^3)}
  \le M.
\end{equation}
Set
\begin{equation}\label{eq:setting of p0}
  p_0=CM^{\frac{2\gamma}{\gamma-1}}.
\end{equation}
By \eqref{eq:pressure-density-relation} and \eqref{eq:acoustic-global-estimates}, we have 
\begin{equation}
\begin{aligned}
\sup_{(x,t)\in\mathbb{R}^3\times[0,\infty)}
|p_h(x,t)|
&\leq
4\sqrt{2\pi\gamma\rho(p_0)p_0}\,\Lambda_*M\\
&=
4\sqrt{2\pi\gamma A^{-\frac{1}{\gamma}}}\,
\Lambda_* C^{\frac{\gamma+1}{2\gamma}}
M^{\frac{2\gamma}{\gamma-1}}.
\end{aligned}
\end{equation}
Since $M>1$, $\gamma>1$, we may choose $C$ sufficiently large,
depending only on $A$, $\gamma$, $\Lambda_*$, such that 
\begin{equation}\label{eq:unifom lower and upper bound for p0+p1+qk}
 p_0+p_h(x,t)+q^{(k)}(x,t) \simeq M^{\frac{2\gamma}{\gamma-1}}.
\end{equation}
uniformly for  every $(x,t)\in\mathbb{R}^3\times[0,T]$.
Similarly, for any multi-index 
$\alpha$, using \eqref{eq:acoustic-global-estimates}, \eqref{eq:acoustic-global-estimates-v} and \eqref{eq:setting of p0} yields
\begin{equation}\label{eq:estimate for p1}
  \max_{0\le t\le T}
  \left\lVert D_x^\alpha p_h(t)\right\rVert_{L^\infty(\mathbb{R}^3)}
  \lesssim M^{\frac{2\gamma}{\gamma-1}},
\end{equation}
\begin{equation}\label{eq:estimate for v1}
  \max_{0\le t\le T}
  \left\lVert D_x^\alpha v_h(t)\right\rVert_{L^\infty(\mathbb{R}^3)}
  \lesssim M.
\end{equation}
Moreover, by  \eqref{eq:smooth_pair} and $\omega_0=\sqrt{\frac{\gamma p_0}{\rho(p_0)}} \simeq M$, then 
\begin{equation}\label{eq:estimate for partial_t v1}
\begin{aligned}
  \max_{0\le t\le T}
  \left\lVert \partial_tv_h(t)\right\rVert_{L^\infty(\mathbb{R}^3)}
  &\lesssim
  \max_{0\le t\le T}
  \left\lVert 
    \omega_0 v_h
  \right\rVert_{L^\infty(\mathbb{R}^3)}
  \lesssim M^2,
\end{aligned}
\end{equation}
and 
\begin{equation}\label{eq:estimate for partial_t p1}
  \max_{0\le t\le T}
  \left\lVert \partial_tp_h(t)\right\rVert_{L^\infty(\mathbb{R}^3)}
  \lesssim
  \left\lVert  \omega_0 p_h \right\rVert_{L^\infty}
  \lesssim M^{\frac{3\gamma-1}{\gamma-1}}.
\end{equation}

Define $U^{(k+1)}
  =\bigl(q^{(k+1)},w^{(k+1)}\bigr)^\top
$
as the solution of the linearized system
\begin{equation}\label{eq: linearized hyperbolic system for U^k}
\begin{cases}
  \widetilde A_0(U^{(k)})\partial_tU^{(k+1)}
  +\displaystyle\sum_{j=1}^3
  \widetilde A_j(U^{(k)})\partial_{x_j}U^{(k+1)}
  =\widetilde G(U^{(k)}),
  &(x,t)\in\mathbb{R}^3\times[0,T],\\[1ex]
  U^{(k+1)}(x,0)=0,
  &x\in\mathbb{R}^3.
\end{cases}
\end{equation}
Here \(\widetilde A_0,\widetilde A_j,\widetilde{G}\) are given by \eqref{eq:A0tilde-q}–\eqref{eq:Gtilde-q} with \((q,w)\) replaced by \((q^{(k)},w^{(k)})\).
By \eqref{eq:unifom lower and upper bound for p0+p1+qk} and \eqref{eq:pressure-density-relation}, the linear system is strictly hyperbolic, i.e. 
\begin{equation}
  M^{-\frac{2\gamma}{\gamma-1}}I
  \lesssim \widetilde A_0(U^{(k)})
  \lesssim M^{\frac{2}{\gamma-1}}I.
\end{equation}
Its coefficients are smooth, by theory of linear hyperbolic system we have  $U^{(k+1)}\in C^\infty$.

\medskip
\noindent\textbf{Step 2: Uniform $H^3_{\mathrm{ul}}$ estimates for
$\{U^{(k)}\}_{k\ge0}$.}

For any multi-index $\alpha$, $0\le|\alpha|\le3$, define 
$$
U_\alpha^{(k+1)}:=D_x^\alpha U^{(k+1)}.
$$

A direct computation yields
\begin{equation}\label{eq:equation for $U^{k+1}$}
\begin{cases}
  \widetilde A_0(U^{(k)})\partial_tU_\alpha^{(k+1)}
  +\displaystyle\sum_{j=1}^3
  \widetilde A_j(U^{(k)})\partial_{x_j}U_\alpha^{(k+1)}
  =S_\alpha+H_\alpha,
  &(x,t)\in\mathbb{R}^3\times[0,T],\\[1ex]
  U_\alpha^{(k+1)}(x,0)=0,
  &x\in\mathbb{R}^3,
\end{cases}
\end{equation}
where
\begin{equation}\label{eq: S_alpha}
  S_\alpha
  :=\widetilde A_0(U^{(k)})
  D_x^\alpha\!\left(
    \widetilde A_0^{-1}(U^{(k)})
    \widetilde G(U^{(k)})
  \right),
\end{equation}
and
\begin{equation}\label{eq: H_alpha}
\begin{aligned}
  H_\alpha
  :=\widetilde A_0(U^{(k)})\sum_{j=1}^3
  \Bigl[
  &D_x^\alpha\!\left(
    \widetilde A_0^{-1}(U^{(k)})
    \widetilde A_j(U^{(k)})
    \partial_{x_j}U^{(k+1)}
  \right)\\
  &-\widetilde A_0^{-1}(U^{(k)})
   \widetilde A_j(U^{(k)})
   D_x^\alpha\partial_{x_j}U^{(k+1)}
  \Bigr].
\end{aligned}
\end{equation}
By Lemma \ref{lemma:local-stability}, we have the following estimate:
\begin{equation}\label{eq:core inequality for U_alpha}
\begin{aligned}
  \left\lVert U_\alpha^{(k+1)}(t)\right\rVert_{L^2(\Omega(t))}
  \le{}&
  C_I^{-1}
  e^{\frac12C_I^{-1}
  \left\lVert \operatorname{div}\widetilde A(U^{(k)})\right\rVert_{L^\infty}t}\\
  &\times
  \int_0^t
  \left(
    \left\lVert S_\alpha(\tau)\right\rVert_{L^2(\Omega(\tau))}
    +\left\lVert H_\alpha(\tau)\right\rVert_{L^2(\Omega(\tau))}
  \right)\,\mathrm{d}\tau.
\end{aligned}
\end{equation}
Here $  \Omega(t)=B_{1+\mathfrak{R}(T-t)}(y).$ $C_I^{-1}$ denotes the upper bound associated with the positive
definite matrix $\widetilde A_0(U^{(k)})$, namely
\begin{equation}\label{eq:upper bound for A_0(Uk)}
  C_I^{-1}\simeq M^{\frac{2\gamma}{\gamma-1}}.
\end{equation}
We proceed to estimate 
\begin{equation*}
  \left\lVert \operatorname{div}\widetilde A(U^{(k)})\right\rVert_{L^\infty},
  \qquad
  \left\lVert S_\alpha(\tau)\right\rVert_{L^2(\Omega(\tau))},
  \qquad
  \left\lVert H_\alpha(\tau)\right\rVert_{L^2(\Omega(\tau))}
\end{equation*}
respectively.
We calculate
\begin{equation*}
\begin{aligned}
  &\left\lVert \partial_t\widetilde A_0(U^{(k)})\right\rVert_{L^\infty}\\
 & =
  \left\lVert 
  \begin{pmatrix}
  \displaystyle
  \frac{\partial_tp^1+\partial_tq^{(k)}}
  {\gamma\bigl(p_0+p^1+q^{(k)}\bigr)^2}
  &0\\[3ex]
  0&
  \rho'\bigl(p_0+p^1+q^{(k)}\bigr)
  \bigl(\partial_tp^1+\partial_tq^{(k)}\bigr)I_3
  \end{pmatrix}
  \right\rVert_{L^\infty}\\
  & \lesssim  \begin{pmatrix}
    M^{-\frac{\gamma+1}{\gamma-1}}&0\\
    0&M^{\frac{\gamma+1}{\gamma-1}}I_3
  \end{pmatrix}.
  \end{aligned}
\end{equation*}
Similar calculation yields
\begin{equation}\label{eq:estimate for A_j(U^k)}
  \left\lVert \widetilde A_j(U^{(k)})\right\rVert_{L^\infty}
  \lesssim
  \begin{pmatrix}
    M^{-\frac{\gamma+1}{\gamma-1}}&e_j^\top\\
    e_j&
    M^{\frac{\gamma+1}{\gamma-1}}I_3
  \end{pmatrix},
\end{equation}
and
\begin{equation*}
  \left\lVert \partial_{x_j}\widetilde A_j(U^{(k)})\right\rVert_{L^\infty}
  \lesssim
  \begin{pmatrix}
    M^{-\frac{\gamma+1}{\gamma-1}}&0\\
    0&M^{\frac{\gamma+1}{\gamma-1}}I_3
  \end{pmatrix},
\end{equation*}
for $j=1,2,3$.
Combining the above results, we obtain 
\begin{equation}\label{eq: estimates for div U^k}
  \left\lVert \operatorname{div}\widetilde A(U^{(k)})\right\rVert_{L^\infty}
  \lesssim
  M^{\frac{\gamma+1}{\gamma-1}}.
\end{equation}
Recall that $\Omega(t)=B_{1+\mathfrak{R}(T-t)}(y)$ in \eqref{eq:core inequality for U_alpha} and by Lemma \ref{lemma:local-stability}, $\mathfrak{R}$ needs to satisfy
\begin{equation*}
\mathfrak{R}\gtrsim \Lambda_A\,C_I^{-1}\simeq
  M^{\frac{3\gamma+1}{\gamma-1}},
\end{equation*}
where $ \Lambda_A
  :=
  \sup_{|n|\le1}
  \left\lVert \sum_{j=1}^3
  n_j\widetilde A_j(U^{(k)})\right\rVert_{L^\infty}
  \lesssim M^{\frac{\gamma+1}{\gamma-1}}$, $C^{-1}_I$ is given by \eqref{eq:upper bound for A_0(Uk)}. We set 
\begin{equation}\label{eq: the first restriction for T}
\mathfrak{R}\simeq
  M^{\frac{3\gamma+1}{\gamma-1}}, \quad
  T\simeq
  O\!\left(M^{-\frac{3\gamma+1}{\gamma-1}}\right).
\end{equation}
Then, for $t\leq T$,
\begin{equation}
  |\Omega(t)|
  =
  \left|B_{1+\mathfrak{R}(T-t)}(y)\right|
  \simeq O(1),
\end{equation}
which shows that the volume of $\Omega(t)$ is independent of $M$. In particular, $\Omega(t)$ can be covered by a finite number of balls of radius one, with the number of balls independent of $M$. Therefore, by \eqref{eq:estimate for $U^k$}, we obtain
\begin{equation}\label{eq: bound for q^k, omega^k in moving domain}
  \max_{0\le t\le T}\sup_{y\in\mathbb{R}^3}
  \left\lVert \bigl(q^{(k)}(t),w^{(k)}(t)\bigr)\right\rVert
  _{H^3(\Omega(t))}
  \lesssim M.
\end{equation}
We proceed to calculate 
\begin{equation*}
  \widetilde A_0^{-1}(U^{(k)})\widetilde G(U^{(k)})
  =
  \begin{pmatrix}
   -\bigl(v_h+w^{(k)}\bigr)\cdot\nabla p_h
   -\gamma\bigl(p^1+q^{(k)}\bigr)\nabla\cdot v^1\\[1ex]
   -\bigl(v_h+w^{(k)}\bigr)\cdot\nabla v_h
   -\displaystyle
   \frac{\mathfrak{m}(q^{(k)})}
   {\rho(p_0+p_h+q^{(k)})}\,\partial_tv_h
  \end{pmatrix},
\end{equation*}
where $\mathfrak{m}$ is defined by \eqref{eq:m-eps-q}. From \eqref{eq:estimate for p1}, \eqref{eq:estimate for v1}, and \eqref{eq: bound for q^k, omega^k in moving domain}, for any $0\le|\alpha|\le 3$ and $0\le t\le T$, we have
\begin{equation*}
\begin{aligned}
 &\left\lVert 
   D_x^\alpha\left[
     (v^1+w^{(k)})\cdot\nabla p^1
     -\gamma(p_h+q^{(k)})\nabla\cdot v_h
   \right]
 \right\rVert_{L^2(\Omega(t))}\\
 &\quad\lesssim
   \sum_{|\beta_1|+|\beta_2|=|\alpha|}
   \Biggl(
     \left\lVert 
       \left(D_x^{\beta_1}v_h+D_x^{\beta_1}w^{(k)}\right) \cdot\nabla D_x^{\beta_2}p_h
     \right\rVert_{L^2(\Omega(t))}\\
 &\qquad\qquad\qquad\qquad{} - \gamma
     \left\lVert
       \left(D_x^{\beta_1}p_h+D_x^{\beta_1}q^{(k)}\right) \nabla\cdot D_x^{\beta_2}v_h
     \right\rVert_{L^2(\Omega(t))}
   \Biggr)\\
 &\quad\lesssim
   M^{\frac{3\gamma-1}{\gamma-1}}
   +M^{\frac{2\gamma}{\gamma-1}} \left\lVert w^{(k)}(t)\right\rVert_{H^3(\Omega(t))}
   +M\left\lVert q^{(k)}(t)\right\rVert_{H^3(\Omega(t))}\\
 &\quad\lesssim
   M^{\frac{3\gamma-1}{\gamma-1}}.
\end{aligned}
\end{equation*}
A similar calculation yields
\begin{equation*}
  \left\lVert 
  D_x^\alpha\!\left[
    (v^1+w^{(k)})\cdot\nabla v^1
  \right]\right\rVert_{L^2(\Omega(t))}
  \lesssim M^2,
  \qquad 0\le t\le T.
\end{equation*}

For $0 \le |\alpha| \le 3$, applying the Leibniz rule to the expression
\begin{equation*}
  D_x^\alpha \!\left[ \frac{\mathfrak{m}(q^{(k)})}{\rho(p_0 + p_h + q^{(k)})} \, \partial_t v_h \right]
\end{equation*}
yields terms involving three spatial derivatives:
\begin{align}
  & D_x^3 \mathfrak{m}(q^{(k)}) \frac{1}{\rho(p_0+p_h+q^{(k)})} \partial_t v_h, 
  \quad
  D_x^2 \mathfrak{m}(q^{(k)}) D_x^1 \!\left( \frac{1}{\rho(p_0+p_h+q^{(k)})} \right) \partial_t v_h, \nonumber\\
  & D_x^1 \mathfrak{m}(q^{(k)}) D_x^2 \!\left( \frac{1}{\rho(p_0+p_h+q^{(k)})} \right) \partial_t v_h, 
  \quad
  \mathfrak{m}(q^{(k)}) D_x^3 \!\left( \frac{1}{\rho(p_0+p_h+q^{(k)})} \right) \partial_t v_h, \nonumber\\
  & D_x^2 \mathfrak{m}(q^{(k)}) \frac{1}{\rho(p_0+p_h+q^{(k)})} D_x^1 \partial_t v_h, 
  \quad
  D_x^1 \mathfrak{m}(q^{(k)}) D_x^1 \!\left( \frac{1}{\rho(p_0+p_h+q^{(k)})} \right) D_x^1 \partial_t v_h, \nonumber\\
  & \mathfrak{m}(q^{(k)}) D_x^2 \!\left( \frac{1}{\rho(p_0+p_h+q^{(k)})} \right) D_x^1 \partial_t v_h, 
  \quad
  D_x^1 \mathfrak{m}(q^{(k)}) \frac{1}{\rho(p_0+p_h+q^{(k)})} D_x^2 \partial_t v_h, \nonumber\\
  & \mathfrak{m}(q^{(k)}) D_x^1 \!\left( \frac{1}{\rho(p_0+p_h+q^{(k)})} \right) D_x^2 \partial_t v_h, 
  \quad
  \mathfrak{m}(q^{(k)}) \frac{1}{\rho(p_0+p_h+q^{(k)})} D_x^3 \partial_t v_h, \label{eq:three spatial derivatives}
\end{align}
as well as the lower-order terms involving zero, one, or two total spatial derivatives:
\begin{align}
  & D_x^2 \mathfrak{m}(q^{(k)}) \frac{1}{\rho(p_0+p_h+q^{(k)})} \partial_t v_h, 
  \quad
  D_x^1 \mathfrak{m}(q^{(k)}) D_x^1 \!\left( \frac{1}{\rho(p_0+p_h+q^{(k)})} \right) \partial_t v_h, \nonumber\\
  & \mathfrak{m}(q^{(k)}) D_x^2 \!\left( \frac{1}{\rho(p_0+p_h+q^{(k)})} \right) \partial_t v_h, 
  \quad
  D_x^1 \mathfrak{m}(q^{(k)}) \frac{1}{\rho(p_0+p_h+q^{(k)})} D_x^1 \partial_t v_h, \nonumber\\
  & \mathfrak{m}(q^{(k)}) D_x^1 \!\left( \frac{1}{\rho(p_0+p_h+q^{(k)})} \right) D_x^1 \partial_t v_h, 
  \quad
  \mathfrak{m}(q^{(k)}) \frac{1}{\rho(p_0+p_h+q^{(k)})} D_x^2 \partial_t v_h, \nonumber\\
  & D_x^1 \mathfrak{m}(q^{(k)}) \frac{1}{\rho(p_0+p_h+q^{(k)})} \partial_t v_h, 
  \quad
  \mathfrak{m}(q^{(k)}) D_x^1 \!\left( \frac{1}{\rho(p_0+p_h+q^{(k)})} \right) \partial_t v_h, \nonumber\\
  & \mathfrak{m}(q^{(k)}) \frac{1}{\rho(p_0+p_h+q^{(k)})} D_x^1 \partial_t v_h, 
  \quad
  \mathfrak{m}(q^{(k)}) \frac{1}{\rho(p_0+p_h+q^{(k)})} \partial_t v_h. \label{eq:zero, one, or two total spatial derivatives}
\end{align}
Next, we compute the explicit derivatives of $\mathfrak{m}(q^{(k)})$ and $\rho(p_0+p_h+q^{(k)})^{-1}$. Recall that
\begin{equation*}
  \mathfrak{m}(q^{(k)})
  = \int_0^1 \rho'\!\left(p_0+\tau(p_h+q^{(k)})\right) (p_h+q^{(k)})\,\mathrm{d}\tau.
\end{equation*}
Differentiating $\mathfrak{m}(q^{(k)})$ directly with respect to $x$ gives
\begingroup\scriptsize
\begin{align}
  D_x^1\mathfrak{m}(q^{(k)})
  ={}& \int_0^1 \Bigl[
    \rho''\!\left(p_0+\tau(p_h+q^{(k)})\right) \tau\bigl(D_x^1p_h+D_x^1q^{(k)}\bigr)(p_h+q^{(k)}) \nonumber\\
    &\qquad {}+ \rho'\!\left(p_0+\tau(p_h+q^{(k)})\right) \bigl(D_x^1p_h+D_x^1q^{(k)}\bigr)
  \Bigr]\,\mathrm{d}\tau, \\[0.5em]
  D_x^2\mathfrak{m}(q^{(k)})
  ={}& \int_0^1 \Bigl[
    \rho'''\!\left(p_0+\tau(p_h+q^{(k)})\right) \tau^2\bigl(D_x^1p_h+D_x^1q^{(k)}\bigr)^2 (p_h+q^{(k)}) \nonumber\\
    &\qquad {}+ \rho''\!\left(p_0+\tau(p_h+q^{(k)})\right) \tau\bigl(D_x^2p_h+D_x^2q^{(k)}\bigr)(p_h+q^{(k)}) \nonumber\\
    &\qquad {}+ 2\rho''\!\left(p_0+\tau(p_h+q^{(k)})\right) \tau\bigl(D_x^1p_h+D_x^1q^{(k)}\bigr)^2 \nonumber\\
    &\qquad {}+ \rho'\!\left(p_0+\tau(p_h+q^{(k)})\right) \bigl(D_x^2p_h+D_x^2q^{(k)}\bigr)
  \Bigr]\,\mathrm{d}\tau, \\[0.5em]
  D_x^3\mathfrak{m}(q^{(k)})
  ={}& \int_0^1 \Bigl[
    \rho''''\!\left(p_0+\tau(p_h+q^{(k)})\right) \tau^3\bigl(D_x^1p_h+D_x^1q^{(k)}\bigr)^3 (p_h+q^{(k)}) \nonumber\\
    &\qquad {}+ 3\rho'''\!\left(p_0+\tau(p_h+q^{(k)})\right) \tau^2 \bigl(D_x^1p_h+D_x^1q^{(k)}\bigr) \nonumber\\
    &\qquad\qquad \times \bigl(D_x^2p_h+D_x^2q^{(k)}\bigr)(p_h+q^{(k)}) \nonumber\\
    &\qquad {}+ 3\rho'''\!\left(p_0+\tau(p_h+q^{(k)})\right) \tau^2\bigl(D_x^1p_h+D_x^1q^{(k)}\bigr)^3 \nonumber\\
    &\qquad {}+ \rho''\!\left(p_0+\tau(p_h+q^{(k)})\right) \tau\bigl(D_x^3p_h+D_x^3q^{(k)}\bigr)(p_h+q^{(k)}) \nonumber\\
    &\qquad {}+ 3\rho''\!\left(p_0+\tau(p_h+q^{(k)})\right) \tau \bigl(D_x^2p_h+D_x^2q^{(k)}\bigr)\bigl(D_x^1p_h+D_x^1q^{(k)}\bigr) \nonumber\\
    &\qquad {}+ \rho'\!\left(p_0+\tau(p_h+q^{(k)})\right) \bigl(D_x^3p_h+D_x^3q^{(k)}\bigr)
  \Bigr]\,\mathrm{d}\tau.
\end{align}
\endgroup
Similarly, adopting the shorthand notations $\rho = \rho(p_0+p_h+q^{(k)})$ and $\rho^{(i)} = \rho^{(i)}(p_0+p_h+q^{(k)})$, the spatial derivatives of $\rho(p_0+p_h+q^{(k)})^{-1}$ are given by
\begingroup\scriptsize
\begin{align}\label{eq:derivatives of 1/rho}
  D_x^1 \!\left(\frac{1}{\rho(p_0+p_h+q^{(k)})}\right)
  ={}& -\frac{\rho'\bigl(D_x^1 p_h + D_x^1 q^{(k)}\bigr)}{\rho^2}, \\[0.5em]
  D_x^2 \!\left(\frac{1}{\rho(p_0+p_h+q^{(k)})}\right)
  ={}& -\frac{\rho''\bigl(D_x^1 p_h + D_x^1 q^{(k)}\bigr)^2 + \rho'\bigl(D_x^2 p_h + D_x^2 q^{(k)}\bigr)}{\rho^2} \nonumber\\
     & {}+ \frac{2(\rho')^2 \bigl(D_x^1 p_h + D_x^1 q^{(k)}\bigr)^2}{\rho^3}, \\[0.5em]
  D_x^3 \!\left(\frac{1}{\rho(p_0+p_h+q^{(k)})}\right)
  ={}& -\frac{\rho'''\bigl(D_x^1 p_h + D_x^1 q^{(k)}\bigr)^3}{\rho^2} \nonumber\\
     & {}-\frac{3\rho''\bigl(D_x^1 p_h + D_x^1 q^{(k)}\bigr)\bigl(D_x^2 p_h + D_x^2 q^{(k)}\bigr)}{\rho^2} \nonumber\\
     & {}-\frac{\rho'\bigl(D_x^3 p_h + D_x^3 q^{(k)}\bigr)}{\rho^2} \nonumber\\
     & {}+\frac{6\rho'\rho''\bigl(D_x^1 p_h + D_x^1 q^{(k)}\bigr)^3}{\rho^3} \nonumber\\
     & {}+\frac{6(\rho')^2\bigl(D_x^1 p_h + D_x^1 q^{(k)}\bigr)\bigl(D_x^2 p_h + D_x^2 q^{(k)}\bigr)}{\rho^3} \nonumber\\
     & {}-\frac{6(\rho')^3\bigl(D_x^1 p_h + D_x^1 q^{(k)}\bigr)^3}{\rho^4}.\label{eq:derivatives of 1/rho 3}
\end{align}
\endgroup
Furthermore, applying \eqref{eq:unifom lower and upper bound for p0+p1+qk} together with the explicit form of $\rho$, we obtain the uniform bounds
\begin{equation}\label{eq:estimates for rho 1}
  \left\lVert \rho(p_0+p_h+q^{(k)})\right\rVert_{L^\infty}
  \simeq M^{\frac{2}{\gamma-1}}, \quad \left\lVert \frac1{\rho(p_0+p_h+q^{(k)})}\right\rVert_{L^\infty}
  \simeq M^{-\frac{2}{\gamma-1}},
\end{equation}
and
\begin{equation}\label{eq:estimates for rho 2}
  \left\lVert \rho^{(i)}(p_0+p_h+q^{(k)})\right\rVert_{L^\infty}
  \simeq
  M^{\frac{2\gamma}{\gamma-1}
  \left(\frac1\gamma-i\right)},
  \qquad i=1,2,3,4,
\end{equation}
where $\rho^{(i)}$ denotes the $i$-th derivative of $\rho$.

The terms in \eqref{eq:three spatial derivatives} and \eqref{eq:zero, one, or two total spatial derivatives} can all be estimated in a uniform manner. Specifically, we bound $p_h$, $v_h$, and their space-time derivatives using \eqref{eq:estimate for p1}--\eqref{eq:estimate for partial_t p1}; $q^{(k)}$, $w^{(k)}$, and their first-order spatial derivatives using \eqref{eq: L infinity norm for q^k, w^k}; the second- and third-order spatial derivatives of $q^{(k)}$ and $w^{(k)}$ using \eqref{eq: bound for q^k, omega^k in moving domain}; and the derivatives of $\rho$ using \eqref{eq:estimates for rho 1} and \eqref{eq:estimates for rho 2}. Combining these expressions and estimates yields, for $t \leq T$,
\begin{equation}
  \sum_{0\le|\alpha|\le3}
  \left\| D_x^\alpha\!\left(
    \frac{\mathfrak{m}(q^{(k)})}
    {\rho(p_0+p^1+q^{(k)})}\partial_tv^1
  \right) \right\|_{L^{2}(\Omega(t))} \lesssim M^2.
\end{equation}
Therefore, for $t\le T,$ 
\begin{equation}\label{eq: estimates for S_alpha}
\begin{aligned}
 &\int_0^t\left\lVert S_\alpha(\tau)\right\rVert_{L^2(\Omega(\tau))}\,\,\mathrm{d}\tau\\
 &\lesssim
 \int_0^t
 \left\lVert \widetilde A_0(U^{(k)})\right\rVert_{L^\infty}
 \left\lVert 
 D_x^\alpha\!\left(
   \widetilde A_0^{-1}(U^{(k)})
   \widetilde G(U^{(k)})
 \right)
 \right\rVert_{L^2(\Omega(\tau))}
 \,\,\mathrm{d}\tau\\
 &\lesssim
 \begin{pmatrix}
 M^{-\frac{2\gamma}{\gamma-1}}&0\\
 0&M^{\frac{2}{\gamma-1}}I_3
 \end{pmatrix}
 \begin{pmatrix}
 M^{\frac{3\gamma-1}{\gamma-1}}\\
 M^2
 \end{pmatrix}T\\
 &\lesssim
 M^{\frac{2\gamma}{\gamma-1}}T.
\end{aligned}
\end{equation}
We proceed to estimate $\int_0^t
  \left\lVert H_\alpha(\tau)\right\rVert_{L^2(\Omega(\tau))}\,\,\mathrm{d}\tau.$
We have 
\begin{equation*}
\begin{aligned}
& \widetilde A_0^{-1}(U^{(k)})
 \widetilde A_j(U^{(k)})
 \partial_{x_j}U^{(k+1)}
 \\&=
 \begin{pmatrix}
 (v^1+w^{(k)})_j\partial_{x_j}q^{(k+1)}
 +\gamma(p_0+p^1+q^{(k)})
 \partial_{x_j}w_j^{(k+1)}
 \\[1.2ex]
 \displaystyle
 \frac1{\rho(p_0+p^1+q^{(k)})}
 \partial_{x_j}q^{(k+1)}e_j
 +(v^1+w^{(k)})_j\partial_{x_j}w^{(k+1)}
 \end{pmatrix}.
 \end{aligned}
\end{equation*}
By Moser-type inequalities in Lemma \ref{lem:moser-calculus}, estimates \eqref{eq:estimate for p1}--\eqref{eq:estimate for partial_t p1}, \eqref{eq: L infinity norm for q^k, w^k} and the basic Sobolev embedding
$H^3\hookrightarrow W^{1,\infty}$,
\begin{equation*}
\begin{aligned}
&
\left\lVert 
D_x^\alpha\!\left[
 (v_h+w^{(k)})_j\partial_{x_j}q^{(k+1)}
\right]
-(v_h+w^{(k)})_j
D_x^\alpha\partial_{x_j}q^{(k+1)}
\right\rVert_{L^2(\Omega(t))}
\\
&\lesssim
\left\lVert D^1_x v_h+Dw^{(k)}\right\rVert_{L^\infty}
\left\lVert q^{(k+1)}(t)\right\rVert_{H^3(\Omega(t))}
\\
&\qquad
+\left\lVert \partial_{x_j}q^{(k+1)}\right\rVert_{L^\infty}
\left\lVert v_h+w^{(k)}\right\rVert_{H^3(\Omega(t))}
\\
&\lesssim
M\left(
\left\lVert q^{(k+1)}(t)\right\rVert_{H^3(\Omega(t))}
+\left\lVert \nabla q^{(k+1)}\right\rVert_{L^\infty(\Omega(t))}
\right)
\\
&\lesssim
M\left\lVert q^{(k+1)}(t)\right\rVert_{H^3(\Omega(t))}.
\end{aligned}
\end{equation*}
Similarly,
\begin{equation*}
\begin{aligned}
&
\left\lVert 
D_x^\alpha\!\left[
 (p_0+p^1+q^{(k)})\partial_{x_j}w_j^{(k+1)}
\right]
-(p_0+p^1+q^{(k)})
D_x^\alpha\partial_{x_j}w_j^{(k+1)}
\right\rVert_{L^2(\Omega(t))}
\\
&\lesssim
M^{\frac{2\gamma}{\gamma-1}}
\left\lVert w^{(k+1)}(t)\right\rVert_{H^3(\Omega(t))}.
\end{aligned}
\end{equation*}
\begin{equation*}
\begin{aligned}
&
\left\lVert 
D_x^\alpha\!\left[
 \frac1{\rho(p_0+p^1+q^{(k)})}
 \partial_{x_j}q^{(k+1)}
\right]
-
\frac1{\rho(p_0+p^1+q^{(k)})}
D_x^\alpha\partial_{x_j}q^{(k+1)}
\right\rVert_{L^2(\Omega(t))}
\\
&\lesssim
\left\lVert 
D_x^1\!\left(
\frac1{\rho(p_0+p^1+q^{(k)})}
\right)\right\rVert_{L^\infty(\Omega(t))}
\left\lVert q^{(k+1)}(t)\right\rVert_{H^3(\Omega(t))}
\\
&\qquad
+\left\lVert \nabla q^{(k+1)}\right\rVert_{L^\infty(\Omega(t))}
\left\lVert 
D_x^3\!\left(
\frac1{\rho(p_0+p^1+q^{(k)})}
\right)\right\rVert_{L^2(\Omega(t))}
\\
&\lesssim
M^{-\frac{2}{\gamma-1}}
\left\lVert q^{(k+1)}(t)\right\rVert_{H^3(\Omega(t))}.
\end{aligned}
\end{equation*}
Here, we use the bounds
\begin{equation*}
  \left\| 
    D_x^1\!\left( \frac{1}{\rho(p_0+p_h+q^{(k)})} \right)
  \right\|_{L^\infty(\Omega(t))}
  \lesssim M^{-\frac{2}{\gamma-1}}
\end{equation*}
and
\begin{equation*}
  \left\| 
    D_x^3\!\left( \frac{1}{\rho(p_0+p_h+q^{(k)})} \right)
  \right\|_{L^2(\Omega(t))}
  \lesssim M^{-\frac{2}{\gamma-1}},
\end{equation*}
which follow directly by combining the explicit expressions for $D_x^1(1/\rho)$ and $D_x^3(1/\rho)$ in \eqref{eq:derivatives of 1/rho} and \eqref{eq:derivatives of 1/rho 3} with estimates \eqref{eq:estimate for p1}--\eqref{eq:estimate for partial_t p1}, \eqref{eq: L infinity norm for q^k, w^k}, and \eqref{eq: bound for q^k, omega^k in moving domain}.

By concluding the above estimates, we obtain
\begingroup\scriptsize 
\begin{equation}\label{eq: estimates for H_alpha}
\begin{aligned}
  \int_0^t
  &\left\lVert H_\alpha(\tau)\right\rVert_{L^2(\Omega(\tau))}\,\mathrm{d}\tau \\
  &\lesssim
  \int_0^t
  \left\lVert \widetilde A_0(U^{(k)})\right\rVert_{L^\infty(\Omega(\tau))} \\
  &\quad \times \Biggl\|
    D_x^\alpha\!\left[
      \widetilde A_0^{-1}(U^{(k)})
      \widetilde A_j(U^{(k)})
      \partial_{x_j}U^{(k+1)}
    \right] 
    - \widetilde A_0^{-1}(U^{(k)})
      \widetilde A_j(U^{(k)})
      D_x^\alpha\partial_{x_j}U^{(k+1)}
  \Biggr\|_{L^2(\Omega(\tau))}
  \,\mathrm{d}\tau \\[0.4em]
  &\lesssim
  \int_0^t
  \begin{pmatrix}
    M^{-\frac{2\gamma}{\gamma-1}} & 0 \\
    0 & M^{\frac{2}{\gamma-1}}I_3
  \end{pmatrix}
  \begin{pmatrix}
    M\left\lVert q^{(k+1)}(\tau)\right\rVert_{H^3(\Omega(\tau))}
    + M^{\frac{2\gamma}{\gamma-1}} \left\lVert w^{(k+1)}(\tau)\right\rVert_{H^3(\Omega(\tau))} \\[0.3em]
    M^{-\frac{2}{\gamma-1}} \mathbf{1}_3 \left\lVert q^{(k+1)}(\tau)\right\rVert_{H^3(\Omega(\tau))}
    + M\mathbf{1}_3  \left\lVert w^{(k+1)}(\tau)\right\rVert_{H^3(\Omega(\tau))}
  \end{pmatrix}
  \,\mathrm{d}\tau \\[0.4em]
  &\lesssim
  M^{\frac{\gamma+1}{\gamma-1}}
  \int_0^t
  \left\lVert U^{(k+1)}(\tau)\right\rVert_{H^3(\Omega(\tau))}\,\mathrm{d}\tau.
\end{aligned}
\end{equation}
\endgroup
Summing over $\alpha$, $0\leq |\alpha|\leq 3 $ to \eqref{eq:core inequality for U_alpha}, and using estimates \eqref{eq:upper bound for A_0(Uk)}, \eqref{eq: estimates for div U^k}, \eqref{eq: estimates for S_alpha}, \eqref{eq: estimates for H_alpha} yields 
\begin{equation}
\begin{aligned}
 &\left\lVert U^{(k+1)}(t)\right\rVert_{H^3(\Omega(t))}\\
 &\lesssim
 M^{\frac{2\gamma}{\gamma-1}}
 e^{
 \frac12
 M^{\frac{2\gamma}{\gamma-1}}\cdot
 M^{\frac{\gamma+1}{\gamma-1}}T
 }
 \left(
 M^{\frac{2\gamma}{\gamma-1}}T
 +
 M^{\frac{\gamma+1}{\gamma-1}}
 \int_0^t
 \left\lVert U^{(k+1)}(\tau)\right\rVert_{H^3(\Omega(\tau))}\,\,\mathrm{d}\tau
 \right)
\\
 &\lesssim
 M^{\frac{4\gamma}{\gamma-1}}T
 e^{\frac12
 M^{\frac{3\gamma+1}{\gamma-1}}T}
+
 M^{\frac{3\gamma+1}{\gamma-1}}
 e^{\frac12
 M^{\frac{3\gamma+1}{\gamma-1}}T}
 \int_0^t
 \left\lVert U^{(k+1)}(\tau)\right\rVert_{H^3(\Omega(\tau))}\,\,\mathrm{d}\tau.
\end{aligned}
\end{equation}
By Gronwall's inequality,
\begin{equation}\label{eq: important estimate__}
\begin{aligned}
 \left\lVert U^{(k+1)}(t)\right\rVert_{H^3(\Omega(t))}
 \lesssim{}&
 M^{\frac{4\gamma}{\gamma-1}}T
 e^{\frac12
 M^{\frac{3\gamma+1}{\gamma-1}}T}
\\
 &\times
 \left(
 1+
 M^{\frac{3\gamma+1}{\gamma-1}}T
 e^{\frac12
 M^{\frac{3\gamma+1}{\gamma-1}}T}
 e^{
 M^{\frac{3\gamma+1}{\gamma-1}}T
 e^{\frac12
 M^{\frac{3\gamma+1}{\gamma-1}}T}}
 \right).
\end{aligned}
\end{equation}
Taking the supremum over $y\in\mathbb{R}^3$, and properly choosing 
\begin{equation}\label{eq: second restriction on T}
 T\simeq M^{-\frac{3\gamma+1}{\gamma-1}},
\end{equation}
we obtain
\begin{equation}\label{eq: estimates for U^k+1}
\max_{0 \leq t\leq T}\sup_{y \in \mathbb{R}^3} \left\lVert U^{(k+1)}(t)\right\rVert_{H^3(B_1(y))}
 \lesssim M,
\end{equation}
here the constant  $C$ in  \eqref{eq: estimates for U^k+1} is the same as in \eqref{eq:estimate for $U^k$}.

It remains to obtain the bound for $\left\lVert \partial_t U^{(k+1)} \right\rVert_{L^\infty}$ to complete the mathematical induction. From \eqref{eq: linearized hyperbolic system for U^k}, we have
\begin{equation*}
  \partial_t U^{(k+1)}
  = \widetilde A_0^{-1}(U^{(k)}) \Biggl[
    \widetilde G(U^{(k)}) - \sum_{j=1}^3 \widetilde A_j(U^{(k)}) \partial_{x_j} U^{(k+1)}
  \Biggr].
\end{equation*}
Recalling the definition of the source term $\widetilde{G}$ given in \eqref{eq:Gtilde-q}, together with the bounds \eqref{eq:unifom lower and upper bound for p0+p1+qk}--\eqref{eq:estimate for v1}, we deduce that the scalar (first) component of $\widetilde{G}(U^{(k)})$ satisfies\begin{equation}
  \left\| 
    \frac{(v_h + w^{(k)})\cdot\nabla p_h + \gamma(p_h + q^{(k)})\nabla\cdot v_h}{\gamma(p_0 + p_h + q^{(k)})}
  \right\|_{L^\infty}
  \lesssim \frac{M \cdot M^{\frac{2\gamma}{\gamma-1}}}{M^{\frac{2\gamma}{\gamma-1}}}
  \lesssim M.
\end{equation}
Similarly, the three-dimensional vector (second) component satisfies
\begin{align}
  & \left\| 
    \rho(p_0 + p_h + q^{(k)})(v_h + w^{(k)})\cdot\nabla v_h + m(q^{(k)})\partial_t v_h
  \right\|_{L^\infty} \nonumber\\
  &\qquad \lesssim M^{\frac{2}{\gamma-1}} M^2 + M^{\frac{2\gamma}{\gamma-1}\left(\frac{1}{\gamma} - 1\right)} M^{\frac{2\gamma}{\gamma-1}} M^2
  \lesssim M^{\frac{2\gamma}{\gamma-1}}.
\end{align}
Combining these two estimates with \eqref{eq:estimate for A_j(U^k)} and \eqref{eq: estimates for U^k+1}, we evaluate the $L^\infty$-norm componentwise. Note that $\mathbf{1}_3 = (1,1,1)^\top$ represents the 3D unit vector:
\begingroup\scriptsize
\begin{align}
  \left\| \partial_t U^{(k+1)} \right\|_{L^\infty}
  &\lesssim \begin{pmatrix}
    M^{\frac{2\gamma}{\gamma-1}} & 0 \\
    0 & M^{-\frac{2}{\gamma-1}} I_3
  \end{pmatrix}
  \Biggl[
    \begin{pmatrix}
      M \\[0.2em]
      M^{\frac{2\gamma}{\gamma-1}} \mathbf{1}_3
    \end{pmatrix} + \begin{pmatrix}
      M^{-\frac{\gamma+1}{\gamma-1}} & \mathbf{1}_3^\top \\[0.2em]
      \mathbf{1}_3 & M^{\frac{\gamma+1}{\gamma-1}} I_3
    \end{pmatrix}
    \begin{pmatrix}
      M \\[0.2em]
      M \mathbf{1}_3
    \end{pmatrix}
  \Biggr] \nonumber \\[0.4em]
  &\lesssim M^{\frac{3\gamma-1}{\gamma-1}}.
\end{align}
\endgroup
This completes the mathematical induction. To summarize, we obtain the uniform bounds for the sequence $\{U^{(k)}\}_{k\ge 0}$,
\begin{equation}\label{eq:uniform_estimates_Uk}
\left\{
\begin{aligned}
  &\max_{0\le t\le T} \sup_{y\in \mathbb{R}^3}
  \left\| U^{(k)}(t) \right\|_{H^3(B_1(y))}
  \lesssim M, \\[0.5em]
  &\max_{0\le t\le T}
  \left\| \bigl(\partial_t q^{(k)}, \partial_t w^{(k)}\bigr)(t) \right\|_{L^\infty(\mathbb R^3)}
  \lesssim M^{\frac{3\gamma-1}{\gamma-1}},
\end{aligned}
\right.
\qquad \text{uniformly for } k\geq 0.
\end{equation}
\textbf{Step 3: $L_{ul}^2$ contraction of $\{U^{(k)}\}_{k\ge 0}$.}

Define $Z^{(k)} = U^{(k+1)} - U^{(k)}$. 
\begin{equation}\label{eq:Z^{(k)}}
\begin{cases}
  \widetilde A_0(U^{(k)})\partial_t Z^{(k)} + \sum_{j=1}^3 \widetilde A_j(U^{(k)})\partial_{x_j} Z^{(k)} = \mathcal{F}^{(k)}, & (x,t)\in \mathbb{R}^3\times [0,T], \\[0.5em]
  Z^{(k)}(x,0) = 0, & x\in \mathbb{R}^3,
\end{cases}
\end{equation}
where
\begin{align}\label{eq:{F}^{(k)}}
  \mathcal{F}^{(k)} &:= \widetilde G(U^{(k)}) - \widetilde G(U^{(k-1)}) \nonumber \\
  &\qquad - \bigl(\widetilde A_0(U^{(k)}) - \widetilde A_0(U^{(k-1)})\bigr)\partial_t U^{(k)} \nonumber \\
  &\qquad - \sum_{j=1}^3 \bigl(\widetilde A_j(U^{(k)}) - \widetilde A_j(U^{(k-1)})\bigr)\partial_{x_j} U^{(k)}.
\end{align}
By Lemma \ref{lemma:local-stability}, estimates \eqref{eq:upper bound for A_0(Uk)}, \eqref{eq: estimates for div U^k}, we obtain 
\begin{align}
  \|Z^{(k)}(t)\|_{L^2(\Omega(t))} 
  &\le C_{I}^{-1} \exp\left(\frac{1}{2} C_{I}^{-1} \|\operatorname{div} \widetilde A_0(U^{(k)})\|_{L^\infty_{x,t}} t\right) \int_0^t \|\mathcal{F}^{(k)}(s)\|_{L^2(\Omega(s))} \,\mathrm{d}s \nonumber \\
  &\lesssim M^{\frac{2\gamma}{\gamma-1}} e^{M^{\frac{\gamma+1}{\gamma-1}} T} \int_0^t \|\mathcal{F}^{(k)}(s)\|_{L^2(\Omega(s))} \,\mathrm{d}s.
\end{align}
We now estimate the terms in $\mathcal{F}^{(k)}$. By \eqref{eq:Gtilde-q}, for the source term difference:
\begingroup\scriptsize
\begin{align}
  &\left\| 
    \frac{-(v_h+w^{(k)})\cdot \nabla p_h - \gamma(p_h+q^{(k)})\nabla\cdot v_h}{\gamma(p_0+p_h+q^{(k)})} 
    + \frac{(v_h+w^{(k-1)})\cdot \nabla p_h + \gamma(p_h+q^{(k-1)})\nabla\cdot v_h}{\gamma(p_0+p_h+q^{(k-1)})} 
  \right\|_{L^2(\Omega(s))} \nonumber \\[0.5em]
  &\le \left\| 
    \frac{(v_h+w^{(k)})\cdot \nabla p_h (q^{(k-1)}-q^{(k)}) - (w^{(k)}-w^{(k-1)})\cdot \nabla p_h (p_0+p_h+q^{(k)})}{\gamma(p_0+p_h+q^{(k)})(p_0+p_h+q^{(k-1)})} 
  \right\|_{L^2(\Omega(s))} \nonumber \\
  &\quad + \left\| 
    \frac{\gamma(p_h+q^{(k)})\nabla\cdot v_h (q^{(k-1)}-q^{(k)}) - \gamma(q^{(k)}-q^{(k-1)})\nabla\cdot v_h (p_0+p_h+q^{(k)})}{\gamma(p_0+p_h+q^{(k)})(p_0+p_h+q^{(k-1)})} 
  \right\|_{L^2(\Omega(s))} \nonumber \\[0.5em]
  &\lesssim \frac{M \cdot M^{\frac{2\gamma}{\gamma-1}}}{M^{\frac{4\gamma}{\gamma-1}}} \left\|q^{(k)}-q^{(k-1)}\right\|_{L^2(\Omega(s))} 
    + \left\|w^{(k)}-w^{(k-1)}\right\|_{L^2(\Omega(s))} \nonumber \\[0.5em]
  &\lesssim \left\|U^{(k)} - U^{(k-1)}\right\|_{L^2(\Omega(s))}.
\end{align}
\endgroup
Similarly, 
\begingroup\scriptsize
\begin{align}
  &\| \rho(p_0+p_h+q^{(k)})(v_h+w^{(k)})\cdot \nabla v_h - \rho(p_0+p_h+q^{(k-1)})(v_h+w^{(k-1)})\cdot \nabla v_h \|_{L^2(\Omega(s))} \nonumber \\
  &\le \| \rho(p_0+p_h+q^{(k)})(w^{(k)}-w^{(k-1)})\cdot \nabla v_h \nonumber \\
  &\qquad + [\rho(p_0+p_h+q^{(k)}) - \rho(p_0+p_h+q^{(k-1)})](v_h+w^{(k-1)})\cdot \nabla v_h \|_{L^2(\Omega(s))} \nonumber \\[0.4em]
  &\lesssim M^{\frac{2}{\gamma-1}} M \left\|w^{(k)}-w^{(k-1)}\right\|_{L^2(\Omega(s))} \nonumber \\
  &\qquad + \|\rho'\|_{L^\infty} \left\|q^{(k)}-q^{(k-1)}\right\|_{L^2(\Omega(s))} \|v_h+w^{(k-1)}\|_{L^\infty} \|\nabla v_h\|_{L^\infty} \nonumber \\[0.4em]
  &\lesssim M^{\frac{\gamma+1}{\gamma-1}} \left\|w^{(k)}-w^{(k-1)}\right\|_{L^2(\Omega(s))} 
    + M^{\frac{2\gamma}{\gamma-1}\left(\frac{1}{\gamma}-1\right)} M^2 \left\|q^{(k)}-q^{(k-1)}\right\|_{L^2(\Omega(s))} \nonumber \\[0.4em]
  &\lesssim M^{\frac{\gamma+1}{\gamma-1}} \left\|U^{(k)} - U^{(k-1)}\right\|_{L^2(\Omega(s))}.
\end{align}

\begin{align}
  \| m(q^{(k)})\partial_t v_h - m(q^{(k-1)})\partial_t v_h \|_{L^2(\Omega(s))} 
  &\lesssim \|\rho'\|_{L^\infty} \left\|q^{(k)}-q^{(k-1)}\right\|_{L^2(\Omega(s))} \|\partial_t v_h\|_{L^\infty} \nonumber \\
  &\lesssim M^{\frac{2\gamma}{\gamma-1}\left(\frac{1}{\gamma}-1\right)} M^2 \left\|q^{(k)}-q^{(k-1)}\right\|_{L^2(\Omega(s))} \nonumber \\[0.4em]
  &\lesssim \left\|U^{(k)} - U^{(k-1)}\right\|_{L^2(\Omega(s))}.
\end{align}
\endgroup
Concluding the above estimates, we obtain 
\begin{equation}
  \|\widetilde G(U^{(k)}) - \widetilde G(U^{(k-1)})\|_{L^2(\Omega(s))} \lesssim M^{\frac{\gamma+1}{\gamma-1}} \|U^{(k)} - U^{(k-1)}\|_{L^2(s)}.
\end{equation}
For the matrix $\widetilde A_0, \widetilde A_j$ differences, we can use the same methods to calculate
\begingroup\scriptsize
\begin{align}
  \|\bigl(\widetilde A_0(U^{(k)}) - \widetilde A_0(U^{(k-1)})\bigr)\partial_t U^{(k)}\|_{L^2(\Omega(s))} 
  &\lesssim M^{\frac{2\gamma}{\gamma-1}\left(\frac{1}{\gamma}-1\right)} M^{\frac{3\gamma-1}{\gamma-1}} \left\|U^{(k)} - U^{(k-1)}\right\|_{L^2(\Omega(s))} \nonumber \\
  &\lesssim M^{\frac{\gamma+1}{\gamma-1}} \left\|U^{(k)} - U^{(k-1)}\right\|_{L^2(\Omega(s))},
\end{align}
\endgroup
and
\begingroup\scriptsize
\begin{equation}
  \|\bigl(\widetilde A_j(U^{(k)}) - \widetilde A_j(U^{(k-1)})\bigr)\partial_{x_j} U^{(k)}\|_{L^2(\Omega(s))} \lesssim M^{\frac{\gamma+1}{\gamma-1}} \left\|U^{(k)} - U^{(k-1)}\right\|_{L^2(\Omega(s))}.
\end{equation}
\endgroup
Combining above estimates, \eqref{eq:Z^{(k)}}, \eqref{eq:{F}^{(k)}} gives
\begin{align}
  \|Z^{(k)}(t)\|_{L^2(B_1(y))} \lesssim M^{\frac{3\gamma+1}{\gamma-1}} e^{M^{\frac{3\gamma+1}{\gamma-1}} T} T \max_{0\le s\le t} \|Z^{(k-1)}(s)\|_{L^2(B_{1+RT}(y))}.
\end{align}
Taking the supremum over $y\in \mathbb{R}^3$, and observing by \eqref{eq: the first restriction for T} that $B_{1+\mathfrak{R} T}(y)$ can be covered by a finite number of unit balls, with the number being independent of $M$, we have
\begin{equation}\label{eq: estimate for Z^k}
  \max_{0\le t\le T} \sup_{y\in \mathbb{R}^3} \|Z^{(k)}(t)\|_{L^2(B_1(y))} 
  \lesssim M^{\frac{3\gamma+1}{\gamma-1}} e^{M^{\frac{\gamma+1}{\gamma-1}} T} T \max_{0\le t\le T} \sup_{y\in \mathbb{R}^3} \|Z^{(k-1)}(t)\|_{L^2(B_1(y))}.
\end{equation}
By choosing
\begin{equation}\label{eq: restriction on T 3}
  T = C M^{-\frac{3\gamma+1}{\gamma-1}}
\end{equation}
with a sufficiently small constant $C > 0$ such that the coefficient on the right-hand side of \eqref{eq: estimate for Z^k} is strictly less than 1, we conclude that $\{U^{(k)}\}_{k\ge 0}$ is a contraction sequence in $C^0([0,T]; L^2_{\mathrm{ul}})$.

Consequently, there exists a unique limit $U$ such that
\[
U^{(k)} \to U \quad \text{in } C([0,T];L^{2}_{\mathrm{ul}}(\mathbb{R}^3)) \quad \text{as } k \to \infty.
\]
Standard interpolation with the uniform bounds \eqref{eq:uniform_estimates_Uk} upgrades this convergence to
\[
U^{(k)} \to U \quad \text{in } C([0,T];H^{s'}_{\mathrm{ul}}(\mathbb{R}^3)) \quad \text{for any } s'<3.
\]
Passing to the limit in the linearized system \eqref{eq: linearized hyperbolic system for U^k} then demonstrates that
$U \in C^{1}([0,T]\times\mathbb{R}^3)$ is a classical solution to \eqref{eq remainder term}, which satisfies the bound
\[
\max_{0 \leqslant t\leqslant T}
\|U(t)\|_{C^{1}(\mathbb{R}^3)}
\le M,
\]
where $T$ simultaneously satisfies \eqref{eq: the first restriction for T}, \eqref{eq: second restriction on T}, and \eqref{eq: restriction on T 3}.

 The uniqueness of the classical solution to \eqref{eq remainder term} in $C^{1}([0,T]\times\mathbb{R}^d)$ follows from a standard application of the energy method (see, e.g., \cite{CourantHilbert1963}), its proof is omitted here.
This completes the proof.
\end{proof}

Now, we are in a position to prove Theorem \ref{thm:quasi-singularity}.
\begin{proof}
Following our construction, we assume the solution $(v,p)$ to system \eqref{eq:euler_pressure_velocity} takes the form
\begin{equation*}
\begin{aligned}
    p &= p_0 + p_h + q, \\
    v &= v_h + w,
\end{aligned}
\end{equation*}
with the initial data for \eqref{eq:euler_pressure_velocity} given by 
\begin{equation*}
\begin{aligned}
    p_{\mathrm{in}} &= p_0 - 2 \sqrt{\gamma p(p_0)} \, M H_g, \\
    v_{\mathrm{in}} &= 2 M \nabla H_g.
\end{aligned}
\end{equation*}
Here, $\{p_h, v_h\}$ is constructed in Theorem~\ref{thm:acoustic-profile}, while the uniqueness, quantitative estimates, and control of the remainder terms $\{q, w\}$ are established in Theorem~\ref{thm:remainder}.

By \eqref{eq:acoustic-point-values}, for $i = 1, 2, 3$ and $j = 1, \ldots, N$, the $i$-th component of $v_h$ satisfies
\begin{equation}
    v_h^{(i)}(z_j, t) \geq M, \quad \text{for } 0 \leq t \leq \frac{\pi}{2\omega_0}.
\end{equation}
Recall that $\omega_0 = \sqrt{\frac{\gamma p_0}{\rho(p_0)}}$. Since $p_0 \simeq M^{\frac{2\gamma}{\gamma-1}}$ in \eqref{eq:setting of p0}, we have $\omega_0 \simeq M$, which implies that the effective time scale is bounded by $t \lesssim M^{-1}$. 

Consequently, within this time bound, the derivatives of the acoustic profile for $(i, k) \in \{1, 2, 3\}^2 \setminus \{(3, 3)\}$ and $j = 1, \ldots, N$ similarly satisfy
\begin{align*}
    \partial_{x_k} v_h^{(i)}(z_j, t) &\geq M, \\
    |\partial_{x_3} v_h^{(3)}(z_j, t)| &\geq 6M. 
\end{align*}
Applying the bounds from \eqref{estimate for q w}, we deduce the lower bounds for the full velocity field $v$:
\begin{align*}
    v^{(i)}(z_j, t)| &\geq |v_h^{(i)}(z_j, t) - |w^{(i)}(z_j, t)| \geq M, \\
    \partial_{x_k} v^{(i)}(z_j, t) &\geq |\partial_{x_k} v_h^{(i)}(z_j, t)| - |\partial_{x_k} w^{(i)}(z_j, t)| \geq M, \\
    |\partial_{x_3} v^{(3)}(z_j, t)| &\geq |\partial_{x_3} v_h^{(3)}(z_j, t)| - |\partial_{x_3} w^{(3)}(z_j, t)| \geq 5M.
\end{align*}
These estimates hold provided $t \lesssim M^{-1}$ and $t$ simultaneously satisfies the constraints \eqref{eq: the first restriction for T}, \eqref{eq: second restriction on T}, and \eqref{eq: restriction on T 3}. Together, these combined restrictions ultimately dictate the more stringent bound $t \lesssim M^{-\frac{3\gamma+1}{\gamma-1}}.$
Similarly, by using \eqref{eq:acoustic-point-values}, \eqref{estimate for q w}, \eqref{eq:setting of p0}, the pressure $p$ satisfies, for $k=1,2,3,$
\begin{align*}
\partial_{x_k}p(z_j,t) \gtrsim M^{\frac{2\gamma}{\gamma-1}},
\end{align*}
for $t   \lesssim M^{-\frac{3\gamma+1}{\gamma-1}}$.
\end{proof}

%%%%%%%%%%%%%%%%%%%%%%%%%%%%%%%%%%%%%%%%%%%%%%%%%%
%%%%%%%%%%%%%%%%%%%%%%%%%%%%%%%%%%%%%%%%%%%%%%%%%%
\subsection*{Acknowledgements}  The work of H. Liu is supported by the Hong Kong RGC General Research Funds (projects 11304224, 11311122 and 11303125),  the NSFC/RGC Joint Research Fund (project N\_CityU101/21), the France-Hong Kong ANR/RGC Joint Research Grant, A-CityU203/19.

%%%%%%%%%%%%%%%%%%%%%%%%%%%%%%%%%%%%%%%%%%%%%%%%%%
%%%%%%%%%%%%%%%%%%%%%%%%%%%%%%%%%%%%%%%%%%%%%%%%%%

\end{document}